 \numberwithin{equation}{section}
 \newtheorem{theorem}{Theorem}[section]
 \newtheorem{lemma}[theorem]{Lemma}
 \newtheorem{proposition}[theorem]{Proposition}
\newenvironment{proof}[1][Proof]{\begin{trivlist}
\item[\hskip \labelsep {\bfseries #1}]}{\end{trivlist}}
\newenvironment{remark}[1][Remark]{\begin{trivlist}
\item[\hskip \labelsep {\bfseries #1}]}{\end{trivlist}}
\newcommand{\qed}{\nobreak \ifvmode \relax \else
      \ifdim\lastskip<1.5em \hskip-\lastskip
      \hskip1.5em plus0em minus0.5em \fi \nobreak
      \vrule height0.75em width0.5em depth0.25em\fi}
\begin{document}

 \title{Non-viscous Regularization of the Davey-Stewartson Equations: Analysis and Modulation Theory}
 \date{}
\author[1]{Yanqiu Guo \footnote{guo@math.tamu.edu} }
\author[2]{Irma Hacinliyan\footnote{hacinliy@itu.edu.tr}}
\author[1,3]{Edriss S. Titi\footnote{titi@math.tamu.edu, edriss.titi@weizmann.ac.il}}
\affil[1]{Department of Mathematics, Texas A\&M University, 3368 TAMU, College Station, TX 77843, USA.}
\affil[2]{Department of Mathematics, Istanbul Technical University, 34469 Maslak, Istanbul, Turkey.}
\affil[3]{Department of Computer Science and Applied Mathematics, Weizmann Institute of Science, Rehovot 76100, Israel.}
\renewcommand\Authands{ and }
  \maketitle

 \begin{abstract}
 In the present study we are interested in the Davey-Stewartson equations (DSE) that model packets of surface and capillary-gravity waves. We focus on the elliptic-elliptic case, for which it is known that DSE may develop a finite-time singularity. We propose three systems of non-viscous regularization to the DSE in variety of parameter regimes under which the finite blow-up of solutions to the DSE occurs. We establish the global well-posedness of the regularized systems for all initial data. The regularized systems, which are inspired by the $\alpha$-models of turbulence and therefore are called the $\alpha$-regularized DSE, are also viewed as unbounded, singularly perturbed DSE. Therefore, we also derive reduced systems of ordinary differential equations for the $\alpha$-regularized DSE by using the modulation theory to investigate the mechanism with which the proposed non-viscous regularization prevents the formation of the singularities in the regularized DSE. This is a follow-up of the work \cite{Cao1,Cao2} on the non-viscous $\alpha$-regularization of the nonlinear Schr\"odinger equation.
\end{abstract}

{\bf MSC Classification:} 35A01, 35B40, 35Q40, 35Q55

\vspace{0.5cm}
{\bf Keyword:} Non-viscous regularization, Helmholtz operator, Davey-Stewartson equations, Perturbed Davey-Stewartson systems, Modulation theory, $\alpha$-regularization
%\newpage

 \section{Introduction}
 The Davey-Stewartson equations (DSE) are given by:
 \begin{align} \label{ds}
 \begin{cases}
  i v_t + \Delta v+\beta |v|^2 v - \rho  \phi_x v=0 \\
  \phi_{xx} +\nu \phi_{yy} =  (|v|^2)_x\\
  v(x,y,0)=v_0(x,y)
 \end{cases}
 \end{align}
 for the spacial variables $(x,y)\in \mathbb R^2$, and the time variable $t\in \mathbb R$, with zero boundary condition at infinity, where the complex-valued function $v(x,y,t)$ represents the amplitude of a wave packet, and the real-valued function $\phi(x,y,t)$ stands for the free long wave mode. This system can be classified as the elliptic-elliptic type for position $\nu$, and the elliptic-hyperbolic type for negative $\nu$. System (\ref{ds}) was first introduced by Davey and Stewartson \cite{dav}, and later by Djordjevic and Redekopp  \cite{red} to model propagation of weakly nonlinear water waves that travels predominantly in one direction, but in which the wave amplitude is modulated slowly in two horizontal directions. System (\ref{ds}) is a Hamiltonian system, which has certain conserved quantities: the $L^2$-energy as well as the Hamiltonian $\cal H$:
 \begin{align}
 {\cal H}(v)=\int_{\mathbb{R}^2}\left[|\nabla v|^2-\frac{\beta}{2} |v|^4+\frac{\rho}{2} \left(\phi_x^2+\nu \phi_y^2 \right)\right]\,dxdy.     \label{hamds}
 \end{align}

Ghidaglia and Saut proved the local well-posedness of the DSE (\ref{ds}) with $\nu>0$ for the initial data $v_0 \in H^1({\mathbb{R}^2})$ in \cite{ghidaglia}.
Moreover, for $\beta \leq \min (\rho,0)$, the solution in the elliptic-elliptic case exists globally in time, whereas for $\beta > \min(\rho,0)$, it has a finite maximum lifespan (cf. \cite{ghidaglia}). Also, the well-posedness and the scattering of a more general and abstract class of the DSE was investigated  in \cite{kuz}.

The ground-state solutions (also known as standing-wave solutions) of the DSE (\ref{ds}) in the elliptic-elliptic case are solutions of the
form $v(x,y,t)=e^{i\lambda t} R(x,y)$ and $\phi(x,y,t)=F(x,y)$, where $R$ and $F$ are real-valued functions with $\lambda>0$.
Accordingly, the ground-state functions $R$ and $F$ satisfy the following coupled nonlinear elliptic eigenvalue problem:
\begin{align}   \label{groundds}
 \begin{cases}
\Delta R - \lambda R +\beta R^3-\rho R F_x=0 \\
 F_{xx} +\nu F_{yy} = (R^2)_x
 \end{cases}
 \end{align}
where $\nu>0$, with zero boundary condition at infinity. The existence of ground-state solutions was established by Cipolatti in \cite{cip}. An alternative way of characterizing the
solution of (\ref{groundds}) is presented in \cite{pap} and it is shown that the solution of the DSE (\ref{ds}) exists globally in time provided
that the initial value $v_0\in H^1(\mathbb R^2)$ satisfies $\|v_0\|_{L^2({\mathbb{R}^2})}<\|R\|_{L^2({\mathbb{R}^2})}$ where $R$ is the ground-state solution of (\ref{groundds}). In \cite{ablowitz}, Ablowitz et. al. explored necessary conditions for wave collapse in the DSE (\ref{ds}) by using the global existence theory and numerical calculations of the ground-state.

The aim of our paper is to introduce three special non-viscous, Hamiltonian regularizations to the nonlinear terms in the elliptic-elliptic DSE (\ref{ds}) in various parameter regimes, and establish the global well-posedness of these regularized systems. These regularizations are in the spirit of the $\alpha-$models of turbulence. We will follow the approach in \cite{Cao1,Cao2} in which an $\alpha$-regularized nonlinear Schr\"odinger equation (NLS) was investigated. See also references to the $\alpha$-models of turbulence in \cite{Cao1}.

 The two-dimensional cubic NLS equation is given by:
 \begin{align}    \label{nls}
 i v_t+\Delta v+|v|^2v=0
 \end{align}
 with the initial condition $v(x,y,0)=v_0(x,y)$, where $v$ is a complex-valued function. It is a model for the propagation of a laser beam in an optical Kerr medium, or a model for water waves at the free surface of an ideal fluid as well as plasma waves (see, e.g., \cite{kel,sul} and references therein).
 It is well known that the 2d cubic NLS (\ref{nls}) blows up in finite time (see, e.g., \cite{car, cav1,cav,gin,gls,kat,sul,wei} and references therein).
 Notice that the 2d cubic NLS is the deep water limit of the DSE. On the other hand, the DSE can be regarded as a perturbation of the 2d cubic NLS, and this perturbation does not effect the blow up rate \cite{fib,pap}.

In \cite{Cao1,Cao2}, the following non-viscous regularized system of the cubic NLS equation (\ref{nls}) is investigated:
\begin{align}   \label{nlsh}
 \begin{cases}
 i v_t+\Delta v+u v=0   \\
 u-\alpha^2 \Delta u=|v|^2
\end{cases}
\end{align}
with the initial condition $v(x,y,0)=v_0(x,y)$, with zero boundary condition at infinity, where $\alpha>0$ is the regularization parameter. Notice that when $\alpha=0$, (\ref{nlsh}) reduces to (\ref{nls}).
It is shown in \cite{Cao1} that the Cauchy problem (\ref{nlsh}) is globally well-posed.
Moreover, by regarding system (\ref{nlsh}) as a perturbation of the cubic NLS equation (\ref{nls}), and by adopting the modulation theory, different scenarios are demonstrated in \cite{Cao2} of how the regularization prevents the formation of the singularities of the cubic NLS equation.

This paper consists of five sections. Section 2 introduces notations, and summarizes some embedding and interpolation theorems, as well as properties of certain elementary operators. In section 3, we briefly introduce three different non-viscous Helmholtz type of $\alpha$-regularizations to the DSE in the elliptic-elliptic case and state the global well-posedness of these $\alpha$-regularized systems. In section 4, we prove the local well-posedness of these $\alpha$-regularized systems by a fixed point argument, as well as the extension to global solutions by using the conservation of the $L^2$-energy and the Hamiltonian. In section 5, we apply modulation theory following ideas from \cite{Cao2, fib, pap, sul}, to shed light on the mechanism of how these regularizations prevent the formation of the singularities in the regularized DSE.

\section{Notations and preliminaries}
The following notations are used throughout the paper.
 \begin{eqnarray*}
 &&\hspace{-0.5cm} \Delta=\partial_{xx}+\partial_{yy}, ~~\Delta_\nu=\partial_{xx}+\nu \partial_{yy} ; \\
 &&\hspace{-0.5cm}  L^q=L^q(\mathbb{R}^2),\;\;\| \cdot \|_q \;\;\text{denotes}\;\; L^q-\textrm{norm};   \\
 && \hspace{-0.5cm} H^q=H^q(\mathbb{R}^2),\;\; \| \cdot \|_{H^q}\;\;\text{denotes}\;\; H^q-\textrm{Sobolev norm};\\
 &&\hspace{-0.5cm} W^{2,p}=W^{2,p}(\mathbb{R}^2),\;\; \|\cdot\|_{W^{2,p}}\;\;\text{denotes}\;\;W^{2,p}-\textrm{Sobolev norm}; \\
 &&\hspace{-0.5cm} L^q_tL^r_{\bf z}=L^q(I;L^r)~ (I=[0,T],\,{\bf z}=(x,y)),\;\; \|\cdot\|_{r,q} \;\; \text{denotes}\;\; L^q_tL^r_{\bf z}-\textrm{norm}.
 \end{eqnarray*}

Next, we recall some classical two-dimensional Gagliardo-Nirenberg and Sobolev inequalities, as well as elementary interpolation estimates (see, e.g., \cite{adm}):
 \begin{eqnarray}
 &\textrm{(1)} & \|v\|_{q}\leq C\|v\| _2^{\frac{2}{q}}\, \|v\|_{H^1}^{\frac{q-2}{q}} \hspace{1cm} \textrm{for} \,v\in H^1,\,\, 0<{\frac{q-2}{q}}\leq 1 \label{inq1}\\
 &\textrm{(2)} & \|v\|_{q}\leq C\|v\| _{W^{2,p}} \hspace{1cm} \textrm{for}\, v\in W^{2,p},\,\, 1<p\leq q, \label{inq2}\\
 &\textrm{(3)} & \|v\|_{q}\leq C\|v\| _{H^1} \hspace{1cm} \textrm{for}\, v\in H^1,\,\, 2\leq q<\infty, \label{inq3}\\
 &\textrm{(4)} & \|v\|_{q}\leq C\|v\| _{H^2} \hspace{1cm} \textrm{for}\, v\in H^2,\,\, 2\leq q\leq\infty, \label{inq4}\\
 &\textrm{(5)} & \|v\|_{2q}\leq C\|v\|_{H^k} \hspace{1cm} \textrm{for}\, v\in H^{k},\,\, k=(q-1)/ q<2, \label{inq5}\\
 &\textrm{(6)} & \|v\|_{H^k}\leq \|v\|_{H^2}^{\frac{k}{2}} \|v\| _{2}^{\frac{2-k}{2}} \hspace{1cm} \textrm{for}\, v\in H^{2},\,\, k<2. \label{inq6}
 \end{eqnarray}

In addition, for the elliptic Helmholtz equation $\psi-\alpha^2 \Delta \psi= \Psi$, its solution will be denoted as $\psi=B(\Psi)$ where
 \begin{align}  \label{def-B}
 B=(\text{Id}-\alpha^2 \Delta)^{-1},
 \end{align}
 where Id represents the identity operator. By Plancherel identity, for $\Psi\in L^2$, one has
 \begin{align}  \label{L2b}
 \|B(\Psi)\|_2  \leq \|\Psi\|_2.
 \end{align}
 Also, for $\Psi\in L^p$, $1<p<\infty$, the following regularity property of elliptic operators is standard (see, e.g. \cite{mit,ste,yud1,yud2}):
 \begin{equation}
   \|B(\Psi)\|_{W^{2,p}}\leq C_{\alpha,p} \|\Psi\|_p, \;\;\;\; \textrm{for} \,\,1<p<\infty, \label{besinq}
 \end{equation}
 where $C_{\alpha,p}$ depends on $\alpha$ and $p$, and $C_{\alpha,p}\sim1/\alpha^2$, as $\alpha\rightarrow0^{+}$.

 Moreover, the Poisson-like equation $\Delta_\nu\psi=\Psi_x$, for $\nu>0$, can be solved in terms of $\Psi$, and we denote by, $\psi_x=E(\Psi)$, where the singular integral operator $E$ is defined via the Fourier transform by
 \begin{equation}
 \widehat{E(f)}(\xi_1,\xi_2)=\frac{\xi_1^2}{\xi_1^2+\nu \xi_2^2}\widehat{f}(\xi_1,\xi_2). \label{eop}
 \end{equation}
 Once again, due to Plancherel identity, for $\Psi\in L^2$, one has
 \begin{align}  \label{L2bb}
 \|E(\Psi)\|_2 \leq \|\Psi\|_2.
 \end{align}
 Also, since the operator $E$ is of order zero, then by the Calderon-Zygmund theorem (see, e.g, \cite{ste,fol}) we have
 \begin{equation}
  \|E(\Psi)\|_p \leq C_p \|\Psi\|_p, \,\,\,\; \textrm{for} \,\,1<p<\infty, \label{posinq}
 \end{equation}
 where $C_p$ depends on $p$.

As usual, throughout the paper, the constant $C$ may vary from line to line.

 \section{Helmholtz $\alpha$-regularized Davey-Stewartson \\ equations}  \label{sec-model}

 In this section, inspired by the inviscid $\alpha$-regularization of the cubic NLS introduced in \cite{Cao1,Cao2} (see also references therein), we propose three different regularizations of the DSE (\ref{ds}) of the elliptic-elliptic type (i.e. $\nu>0$) in the parameter regime $\beta>\min(\rho,0)$ where the finite-time blow-up takes place \cite{ghidaglia}. We also state the global well-posedness of these $\alpha$-regularized systems.

 \subsection{Case 1: $\rho>0$ and $\beta>0$}
 Under this scenario, by the conservation of the Hamiltonian (\ref{hamds}) of DSE (\ref{ds}), we see that the cubic nonlinearity $\beta |v|^2 v$ in (\ref{ds}) tends to amplify the $H^1$-norm, while the nonlocal term $-\rho \phi_x v$ can be viewed as a dissipation. Consequently, the finite-time blow-up of the $H^1$-norm of the DSE (\ref{ds}) is caused by the growth of the local term $\beta |v|^2 v$, which should be regularized to guarantee global existence in $H^1$. As a result, we introduce the first $\alpha$-regularized Davey-Stewartson equations (RDS1):
 \begin{align}   \label{dsh1}
 \begin{cases}
  i v_t + \Delta v+\beta u v-\rho \phi_{x}v=0, \hspace{0.5cm} \Delta_\nu \phi=(|v|^2)_x,\\
  \hspace{1.5cm} u-\alpha^2 \Delta u=|v|^2, \\
  \hspace{1.5cm} v(x,y,0)=v_0(x,y),
 \end{cases}
 \end{align}
 where $\nu>0$, $\alpha>0$, $\rho>0$ and $\beta>0$. Notice that system (\ref{dsh1}) reduces to the DSE (\ref{ds}) when $\alpha=0$.
 Formally, system (\ref{dsh1}) has two conserved quantities:  the $L^2$-energy and the Hamiltonian,
 \begin{align}   \label{hamds1}
 {\cal H}_1(v) = \int_{\mathbb{R}^2}\left[|\nabla v|^2-\frac{\beta}{2} u|v|^2 +  \frac{\rho}{2} \left(\phi_x^2+\nu \phi_y^2 \right)\right]\,dxdy.
 \end{align}

The RDS1 system (\ref{dsh1}) is globally well-posed in $H^1$. In particular, we have
\begin{theorem}  \label{global1}
Let $v_0\in H^1$, then there exists a unique global solution of system RDS1 (\ref{dsh1}), for all $t\in \mathbb R$, such that $v \in {\cal C}(\mathbb R,H^1)\cap \, {\cal C}^1(\mathbb R,H^{-1})$, and $\nabla \varphi \in {\cal C}(\mathbb R,L^p)$, for $p>1$. Moreover, the energy ${\cal N}(v)=\|v\|_2^2$  and the Hamiltonian $\mathcal H_1(v)$ are conserved in time. In addition, the solution depends continuously on the initial data.
\end{theorem}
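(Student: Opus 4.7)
The plan is to rewrite the triple \eqref{dsh1} as a single semilinear Schr\"odinger equation for $v$ alone,
\begin{equation*}
iv_t+\Delta v+\beta B(|v|^2)\,v-\rho E(|v|^2)\,v=0,
\end{equation*}
by substituting $u=B(|v|^2)$ from \eqref{def-B} and $\phi_x=E(|v|^2)$ from \eqref{eop}. I would then prove local well-posedness by a contraction argument in $C([-T,T];H^1)$ applied to the Duhamel formula
\begin{equation*}
v(t)=e^{it\Delta}v_0+i\int_0^t e^{i(t-s)\Delta}\bigl(\beta B(|v|^2)v-\rho E(|v|^2)v\bigr)(s)\,ds,
\end{equation*}
using that $e^{it\Delta}$ is an $H^k$-isometry. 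The needed locally Lipschitz estimate on the nonlinearity splits into two pieces: for the $B$-term one applies the product rule in $H^1$ together with $\|B(|v|^2)\|_{H^2}\le C_\alpha\|v\|_4^2$ from \eqref{besinq} and the two-dimensional embedding $W^{2,2}\hookrightarrow L^\infty$ built into \eqref{inq4}; for the $E$-term one uses that $E$ commutes with derivatives so $\nabla E(|v|^2)=E(\nabla|v|^2)$, and then \eqref{posinq} on $L^4$ combined with \eqref{inq3} delivers $\|E(|v|^2)v\|_{H^1}\le C\|v\|_{H^1}^3$. Standard difference estimates then yield contraction on a ball of radius $2\|v_0\|_{H^1}$ for $T=T(\|v_0\|_{H^1})>0$, and the $C^1(\mathbb{R};H^{-1})$ regularity is read off directly from the equation.

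The decisive step is the a priori $H^1$-bound extracted from the conserved quantities. I would first establish the identities $\frac{d}{dt}\|v\|_2^2=0$ and $\frac{d}{dt}{\cal H}_1(v)=0$ for smooth solutions (by multiplying the first equation by $\bar v$ and $\bar v_t$ respectively, using the self-adjointness of $B$ and of the zero-order operator $\partial_x\Delta_\nu^{-1}\partial_x$) and then propagate these identities to $H^1$-solutions via approximation. Setting $w=u=B(|v|^2)$, the relation $|v|^2=w-\alpha^2\Delta w$ gives
\begin{equation*}
\int u|v|^2\,dxdy=\|w\|_2^2+\alpha^2\|\nabla w\|_2^2\ge 0,
\end{equation*}
so the sign of $-\tfrac{\beta}{2}\int u|v|^2$ in ${\cal H}_1$ is indeed the dangerous one. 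However, here is exactly where the regularization pays off: by \eqref{besinq} with $p=2$, the embedding $W^{2,2}\hookrightarrow L^\infty$, and the Gagliardo--Nirenberg inequality \eqref{inq1},
\begin{equation*}
\|u\|_\infty\le C_\alpha\||v|^2\|_2=C_\alpha\|v\|_4^2\le C_\alpha\|v\|_2\|\nabla v\|_2,
\end{equation*}
so that $\int u|v|^2\le C_\alpha\|v\|_2^3\|\nabla v\|_2$. Since $\rho,\nu>0$, the term $\tfrac{\rho}{2}(\|\phi_x\|_2^2+\nu\|\phi_y\|_2^2)$ in ${\cal H}_1$ is nonnegative and may be discarded; combining this with $L^2$-conservation, rearranging, and applying Young's inequality produces the uniform-in-time bound
\begin{equation*}
\|\nabla v(t)\|_2^2\le 2|{\cal H}_1(v_0)|+C_\alpha\|v_0\|_2^6.
\end{equation*}
This converts what was a critical nonlinearity in the unregularized DSE into a subcritical one, which is the precise mechanism by which $B$ prevents blow-up.

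The remaining conclusions follow routinely. Since the local existence time depends only on $\|v\|_{H^1}$, the a priori bound extends the solution to all of $\mathbb{R}$. Continuous dependence on $v_0$ follows from the same contraction estimate applied to the difference of two solutions together with Gr\"onwall. The assertion $\nabla\phi\in C(\mathbb{R};L^p)$ for $p>1$ comes from $\phi_x=E(|v|^2)$ and the analogous zero-order representation for $\phi_y$, together with \eqref{posinq} and \eqref{inq3}: $\|\nabla\phi\|_p\le C\||v|^2\|_p=C\|v\|_{2p}^2\le C\|v\|_{H^1}^2$. The principal technical obstacle I anticipate is the $H^1$-Lipschitz estimate on $E(|v|^2)v$, since $E$ is merely a bounded singular integral and offers no smoothing, forcing one to distribute the single derivative carefully onto $|v|^2$ and rely on \eqref{posinq} in an $L^p$ with $p>1$ rather than on $L^2$; by contrast the $B$-term enjoys two full orders of regularization and is entirely routine.
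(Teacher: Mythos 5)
Your reduction of \eqref{dsh1} to a single equation for $v$, your a priori bound from the conserved quantities, and your treatment of $\nabla\phi$ all match the paper's strategy (the paper writes the argument out only for RDS3, but deliberately avoids using the smoothing of $B$ in the local theory precisely so that the same proof covers RDS1, whose nonlocal term is not smoothed). The genuine gap is in your local well-posedness step. A contraction in $C([-T,T];H^1)$ based solely on the fact that $e^{it\Delta}$ is an $H^1$-isometry requires the nonlinearity to be locally Lipschitz from $H^1$ to $H^1$, and your key claim $\|E(|v|^2)v\|_{H^1}\le C\|v\|_{H^1}^3$ fails in two dimensions. Writing $\nabla\bigl(E(|v|^2)v\bigr)=E(\nabla|v|^2)\,v+E(|v|^2)\nabla v$, already the second term breaks down: by \eqref{posinq} and \eqref{inq3}, $E(|v|^2)$ lies in every $L^q$ with $q<\infty$ but not in $L^\infty$ (the operator $E$ is of order zero and gains no derivatives), while $\nabla v$ lies only in $L^2$; H\"older then places the product in $L^r$ only for $r<2$, never in $L^2$. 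The first term has the same defect, since $\nabla|v|^2=2\,\mathrm{Re}(\bar v\nabla v)$ lies only in $L^p$ for $p<2$. No redistribution of the single derivative repairs this --- it is exactly the obstruction that prevents a naive $C_tH^1$ contraction for the $2$d cubic NLS, and the $E$-term of RDS1 behaves like $|v|^2v$. (Your $B$-term estimate is fine precisely because $B$ gains two derivatives and $H^2(\mathbb{R}^2)\hookrightarrow L^\infty$.)

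The repair, and what the paper actually does, is to run the fixed point in the Kato--Strichartz spaces $X=L^\infty_tL^2_{\mathbf z}\cap L^4_tL^4_{\mathbf z}$ and $Y=\{v\in X:\nabla v\in X\}$, placing the nonlinearity and its gradient in $L^{4/3}_tL^{4/3}_{\mathbf z}$ and invoking Lemma \ref{lgog1} (cf.\ \eqref{f14.3}): there one only needs estimates such as $\|E(|v|^2)\nabla v\|_{4/3}\le\|E(|v|^2)\|_{4}\|\nabla v\|_{2}\le C\|v\|_{H^1}^2\|\nabla v\|_2$, which do close, at the price of the small factor $T^{1/2}$ coming from the time integration. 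Everything downstream of the local theory in your proposal --- conservation of ${\cal N}$ and ${\cal H}_1$ (which the paper justifies by first establishing an $H^2$ local theory and then approximating $H^1$ data), discarding the nonnegative term $\frac{\rho}{2}\bigl(\|\phi_x\|_2^2+\nu\|\phi_y\|_2^2\bigr)$ since $\rho,\nu>0$, the bound $\int u|v|^2\le C_\alpha\|v\|_2^3\|v\|_{H^1}$ via \eqref{besinq}, \eqref{inq4} and \eqref{inq1}, and Young's inequality --- is correct and coincides with the paper's global-extension argument.
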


 \subsection{Case 2: $\rho<\beta<0$}
 In this case, by the structure of the Hamiltonian (\ref{hamds}) of DSE (\ref{ds}), we notice that the nonlocal term $-\rho \phi_x v$ in DSE (\ref{ds}) may amplify the $H^1$-norm, while the nonlinearity $\beta |v|^2 v$ can be considered as a dissipation. Furthermore, since $\rho<\beta<0$, the nonlocal term overcomes the cubic nonlinearity, leading to a finite-time blow-up \cite{ghidaglia}. Therefore, in order to obtain the global existence of solutions, the nonlocal term $-\rho v \phi_x $ should be smoothed. We introduce the second $\alpha$-regularized Davey-Stewartson equations (RDS2) as follows:
 \begin{align} \label{dsh2}
 \begin{cases}
  i v_t + \Delta v+\beta |v|^2 v-\rho \varphi_{x}v=0,\hspace{0.5cm} \Delta_\nu \psi=u_x,  \\
  \hspace{0.5cm} u-\alpha^2 \Delta u=|v|^2, \hspace{0.5cm}\varphi-\alpha^2 \Delta \varphi=\psi,   \\
  \hspace{1.5cm} v(x,y,0)=v_0(x,y),
 \end{cases}
 \end{align}
 where $\nu>0$, $\alpha>0$ and $0>\beta>\rho$.
 Here the RDS2 system (\ref{dsh2}) reduces to the DSE (\ref{ds}) when $\alpha=0$. Formally, system (\ref{dsh2}) has two conserved quantities: the $L^2$-energy and the Hamiltonian:
 \begin{eqnarray}
 {\cal H}_2(v) = \int_{\mathbb{R}^2}\left[|\nabla v|^2 - \frac{\beta}{2} |v|^4 + \frac{\rho}{2} \left(\psi_x^2+\nu \psi_y^2 \right)\right]\,dxdy. \label{hamds2}
 \end{eqnarray}

The following result states that the RDS2 (\ref{dsh2}) is globally well-posed in $H^1$.
\begin{theorem}  \label{global2}
Let $v_0\in H^1$, then there exists a unique global solution of the system RDS2 (\ref{dsh2}), for all $t\in \mathbb R$, such that $v \in {\cal C}(\mathbb R,H^1)\cap \, {\cal C}^1(\mathbb R,H^{-1})$, and $\nabla \varphi \in {\cal C}(\mathbb R,W^{4,p})$ for $p>1$. Moreover, the energy ${\cal N}(v)=\|v\|_2^2$  and the Hamiltonian $\mathcal H_2(v)$ are conserved in time. In addition, the solution depends continuously on the initial data.
\end{theorem}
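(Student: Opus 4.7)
The plan mirrors the strategy for RDS1 and adapts the approach of \cite{Cao1,Cao2} to the doubly regularized nonlocal term. I would first establish local well-posedness in $H^1$ by a Banach fixed-point argument for the Duhamel formulation
\begin{equation*}
v(t) = e^{it\Delta}v_0 + i\int_0^t e^{i(t-s)\Delta}\bigl(\beta|v|^2 v - \rho\,\varphi_x v\bigr)\,ds,
\end{equation*}
in $X_T = \mathcal{C}([0,T];H^1)$. The cubic term $\beta|v|^2 v$ is handled as for the 2D cubic NLS using the Gagliardo--Nirenberg inequalities (\ref{inq1})--(\ref{inq3}). The nonlocal term is actually easier here because $\varphi_x = B\circ E\circ B(|v|^2)$ is \emph{doubly} smoothed: combining (\ref{besinq}) with (\ref{posinq}) shows that $\|\varphi_x\|_\infty$ is controlled by $\|v\|_{2p}^2$ for any $p\in(1,\infty)$, so that $\rho\,\varphi_x v$ is Lipschitz on bounded sets of $H^1$.

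Next I would justify the formal conservation laws. Conservation of $\mathcal{N}(v)=\|v\|_2^2$ is immediate from multiplying the $v$-equation by $\bar v$, integrating, and taking imaginary parts. Conservation of $\mathcal{H}_2(v)$ reflects the Hamiltonian structure of the system, and I would verify it by smooth-data approximation, relying on the self-adjointness of the Fourier multipliers $B$ and $E$.

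The main obstacle is to extract a uniform $H^1$ a priori bound from $\mathcal{H}_2$ in the present regime $\rho<\beta<0$: the term $\frac{\rho}{2}\int(\psi_x^2+\nu\psi_y^2)\,dx\,dy$ enters with the unfavorable sign, and the naive bound $\int(\psi_x^2+\nu\psi_y^2)\le\|u\|_2^2\le\|v\|_4^4$ only yields, after Gagliardo--Nirenberg, a smallness-of-data conclusion. The resolution is to use both copies of $B$ simultaneously. Exploiting the self-adjointness of $B$ and $E$,
\begin{equation*}
\int_{\mathbb{R}^2}(\psi_x^2+\nu\psi_y^2)\,dx\,dy = \int_{\mathbb{R}^2} E(u)\,u\,dx\,dy = \int_{\mathbb{R}^2} B(E(B(|v|^2)))\,|v|^2\,dx\,dy = \int_{\mathbb{R}^2} \varphi_x\,|v|^2\,dx\,dy.
\end{equation*}
Since the two-dimensional Bessel kernel associated with $B=(\mathrm{Id}-\alpha^2\Delta)^{-1}$ lies in $L^2(\mathbb{R}^2)$, Young's convolution inequality together with (\ref{L2b}) and (\ref{L2bb}) gives $\|\varphi_x\|_\infty\le C_\alpha\|v\|_4^2$. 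Combining this with $\|v\|_4^2\le C\|v\|_2\|\nabla v\|_2$ from (\ref{inq1}) and conservation of $\|v\|_2$ yields
\begin{equation*}
\Bigl|\int_{\mathbb{R}^2}(\psi_x^2+\nu\psi_y^2)\,dx\,dy\Bigr| \le C_\alpha\|v_0\|_2^3\,\|\nabla v\|_2.
\end{equation*}
Inserting this into the conserved $\mathcal{H}_2$ and applying Young's inequality to absorb $\|\nabla v\|_2$ on the left produces a uniform-in-time bound on $\|\nabla v\|_2^2$.

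With this $H^1$ a priori bound the local solution extends to all $t\in\mathbb{R}$ by the usual blow-up alternative, and reading off $v_t$ from the equation gives $v\in\mathcal{C}^1(\mathbb{R};H^{-1})$. The claimed regularity $\nabla\varphi\in\mathcal{C}(\mathbb{R};W^{4,p})$ for $p>1$ follows by applying (\ref{besinq}) twice together with (\ref{posinq}) along the chain $|v|^2\mapsto u\mapsto\nabla\psi\mapsto\nabla\varphi$, using $v\in H^1\subset L^q$ for every $q<\infty$ in dimension two. Continuous dependence on the initial data follows from applying the same $H^1$ estimates to the difference of two solutions together with Gronwall's inequality.
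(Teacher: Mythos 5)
Your strategy is sound and the decisive new estimate is correct, but you implement the coercivity bound by a genuinely different route than the paper's. The paper writes out only the RDS3 case and observes the others are analogous; since $\psi_x=E(B(|v|^2))$ in RDS2 exactly as in RDS3, its frequency-space estimate (\ref{thi1}) applies verbatim and gives $\|\psi_x\|_2^2+\nu\|\psi_y\|_2^2\le C_\nu\alpha^{-2}\|v\|_2^4$, a bound purely in terms of the conserved $L^2$ norm, so no Young absorption is needed; and since $\beta<0$ the quartic term in $\mathcal H_2$ has a favorable sign and is simply dropped. Your route --- the identity $\int(\psi_x^2+\nu\psi_y^2)\,dxdy=\int\varphi_x|v|^2\,dxdy$ via self-adjointness of $B$ and $E$, the bound $\|\varphi_x\|_\infty\le C_\alpha\|v\|_4^2$ from the $L^2$ Bessel kernel, then (\ref{inq1}) and Young --- is also valid and closes the estimate; it parallels what the paper does for the term $\int u|v|^2\,dxdy$ in (\ref{qub}). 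Both arguments exploit the same mechanism: one factor of $B$ converts the merely bounded symbol $\widehat{|v|^2}$ into something square-integrable at the cost of $\alpha^{-2}$, and either one copy (paper) or two copies (your version) of $B$ suffice.

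The one genuine soft spot is the local theory. You propose a contraction ``in $X_T=\mathcal C([0,T];H^1)$'' using only the Gagliardo--Nirenberg inequalities, but for the \emph{unregularized} cubic term $\beta|v|^2v$ (which survives in RDS2) this does not close with the unitary bound $\|Gf\|_{L^\infty_tL^2}\le\|f\|_{L^1_tL^2}$ alone: controlling $\|\nabla(|v|^2v)\|_2\lesssim\||v|^2\nabla v\|_2$ requires $\|v\|_\infty$, which $H^1(\mathbb R^2)$ does not furnish. One must route the estimate through the inhomogeneous Strichartz bounds of Lemma \ref{lgog1} and an auxiliary $L^4_tL^4_{\mathbf z}$ component, e.g. $\||v|^2\nabla v\|_{\frac43}\le\|v\|_8^2\|\nabla v\|_2$ followed by $\|Gf\|_{L^\infty_tL^2}\le\gamma\|f\|_{\frac43,\frac43}$; this is precisely the role of the Kato-type spaces $X$, $Y$ in Section 4 of the paper, and the same repair is needed for your Gronwall argument for continuous dependence (the difference of solutions must be measured in the $X$-metric, not the $H^1$-metric). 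With that adjustment the remaining steps --- $L^2$ conservation by pairing with $\bar v$, Hamiltonian conservation by $H^2$ approximation, the regularity $\nabla\varphi\in\mathcal C(\mathbb R,W^{4,p})$ from two applications of (\ref{besinq}) and one of (\ref{posinq}), and the blow-up alternative --- match the paper.
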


 \subsection{Case 3: $\rho<0$ and $\beta>0$}
 Notice that each of the two nonlinear terms individually may cause the blow-up of DSE (\ref{ds}), and thus both of them should be smoothed in order to prevent the development of singularity. As a result, the third $\alpha$-regularized Davey-Stewartson equations (RDS3) is given by:
 \begin{align} \label{dsh3}
 \begin{cases}
  i v_t + \Delta v+\beta u v-\rho \varphi_{x}v=0,\hspace{0.5cm} \Delta_\nu \psi=u_x, \\
  \hspace{0.5cm} u-\alpha^2 \Delta u=|v|^2, \hspace{0.5cm} \varphi-\alpha^2 \Delta \varphi=\psi,\\
  \hspace{1.5cm} v(x,y,0)=v_0(x,y),
 \end{cases}
 \end{align}
 where $\nu>0$, $\alpha>0$, $\rho<0$ and $\beta>0$.
 As in previous cases, the RDS3 system (\ref{dsh3}) reduces to the DSE (\ref{ds}) when $\alpha=0$. Formally, system (\ref{dsh3}) has two conserved quantities: the $L^2-$energy and the Hamiltonian:
 \begin{eqnarray}
 && {\cal H}_3(v) = \int_{\mathbb{R}^2}\left[|\nabla v|^2 -  \frac{\beta}{2} u|v|^2 + \frac{\rho}{2} \left(\psi_x^2+\nu \psi_y^2   \right)\right]\,dxdy. \label{hamds3}
 \end{eqnarray}

The following theorem states that the RDS3 (\ref{dsh3}) is globally well-posed in $H^1$.
\begin{theorem}  \label{global3}
Let $v_0\in H^1$, then there exists a unique global solution of system RDS3 (\ref{dsh3}), for all $t\in \mathbb R$, such that $v \in {\cal C}(\mathbb R,H^1)\cap \, {\cal C}^1(\mathbb R,H^{-1})$, and $\nabla \varphi \in {\cal C}(\mathbb R,W^{4,p})$, for $p>1$. Moreover, the energy ${\cal N}(v)=\|v\|_2^2$  and the Hamiltonian $\mathcal H_3(v)$ are conserved in time. In addition, the solution depends continuously on the initial data.
\end{theorem}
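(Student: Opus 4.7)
The plan is to adapt the blueprint used for Theorems \ref{global1} and \ref{global2} but with the two Helmholtz smoothings applied simultaneously. First I would eliminate the auxiliary unknowns by writing $u=B(|v|^2)$, $\psi_x=E(u)=E(B(|v|^2))$ and, since $B$ commutes with $\partial_x$, $\varphi_x = B(\psi_x) = B(E(B(|v|^2)))$, so that (\ref{dsh3}) collapses to a single Schr\"odinger-type equation for $v$,
$$i v_t+\Delta v+\beta B(|v|^2)\,v-\rho B(E(B(|v|^2)))\,v=0,$$
recast in Duhamel form via the unitary propagator $e^{it\Delta}$. The auxiliary $u,\psi,\varphi$ are recovered from $v$ a posteriori, and the regularity $\nabla\varphi\in \mathcal{C}(\mathbb{R}; W^{4,p})$ follows from the chain $v\in H^1\hookrightarrow L^{2p}\Rightarrow |v|^2\in L^p\Rightarrow u\in W^{2,p}\Rightarrow \nabla\psi\in W^{2,p}\Rightarrow \nabla\varphi=B(\nabla\psi)\in W^{4,p}$, using (\ref{besinq}), (\ref{posinq}) and (\ref{inq3}).

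For local well-posedness I would run a contraction argument in $\mathcal{C}([-T,T]; H^1)$ for small $T$. The double smoothing makes the nonlinear coefficients $B(|v|^2)$ and $B(E(B(|v|^2)))$ lie in $W^{2,p}$ for every $p\in(1,\infty)$ by (\ref{besinq})--(\ref{posinq}), while $H^1\hookrightarrow L^q$ for every $q<\infty$ by (\ref{inq3}). Combining these yields
$$\|\beta B(|v|^2)\,v-\rho B(E(B(|v|^2)))\,v\|_{H^1}\le C_\alpha\|v\|_{H^1}^3,$$
together with matching Lipschitz bounds on balls. Since $e^{it\Delta}$ is unitary on every $H^s$, the standard fixed-point setup produces a unique $v\in\mathcal{C}([-T,T]; H^1)\cap\mathcal{C}^1([-T,T]; H^{-1})$, and continuous dependence on the initial data is built into the contraction estimates.

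The conservation of $\mathcal{N}(v)=\|v\|_2^2$ follows by pairing the $v$-equation with $\bar v$, integrating, and noting that the real-valued coefficients $u$ and $\varphi_x$ force the nonlinear contributions to vanish in the imaginary part. The conservation of $\mathcal{H}_3$ is verified by differentiating in time and substituting the equations of motion together with the elliptic identities for $u,\psi,\varphi$; the cancellations are routine but form the longest computation and parallel the verifications carried out for $\mathcal{H}_1$ and $\mathcal{H}_2$.

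I expect the main obstacle to be the uniform \emph{a priori} $H^1$ bound: unlike in Theorems \ref{global1} and \ref{global2}, here both nonlinearities of the original DSE carry the dangerous sign, so both smoothed terms in $\mathcal{H}_3$ have the unfavourable sign at once. From $\mathcal{H}_3(v(t))=\mathcal{H}_3(v_0)$, together with $\beta>0$ and $\rho<0$,
$$\|\nabla v(t)\|_2^2 = \mathcal{H}_3(v_0)+\frac{\beta}{2}\int u|v|^2\,dx\,dy+\frac{|\rho|}{2}\int(\psi_x^2+\nu\psi_y^2)\,dx\,dy,$$
so both positive integrals must be controlled in terms of $\|v_0\|_2$ and $\alpha$, modulo absorbable powers of $\|\nabla v\|_2$. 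The key device is Young's convolution inequality with the two-dimensional Bessel kernel $G_\alpha\in L^2(\mathbb{R}^2)$, giving $\|u\|_2\le C_\alpha\|v_0\|_2^2$; combined with (\ref{L2bb}) and the Fourier estimate $\|\psi_y\|_2\le \frac{1}{2\sqrt\nu}\|u\|_2$, this bounds the second integral uniformly in time. For the cubic term, $\int u|v|^2\le \|u\|_\infty\|v_0\|_2^2$, while $\|u\|_\infty\le C\|u\|_{H^2}\le C_\alpha\|v\|_4^2\le C_\alpha\|v_0\|_2\|v\|_{H^1}$ via (\ref{besinq}), (\ref{inq4}) and (\ref{inq1}); Young's inequality absorbs the resulting $\|\nabla v\|_2$ factor, producing a bound $\|v(t)\|_{H^1}\le K(v_0,\alpha,\beta,\rho,\nu)$ that is uniform in $t$ and allows iterative extension of the local solution to all of $\mathbb{R}$.
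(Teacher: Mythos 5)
Your overall architecture matches the paper's: reduce (\ref{dsh3}) to the single equation $iv_t+\Delta v+F(v)=0$ with $F(v)=\beta B(|v|^2)v-\rho B(E(B(|v|^2)))v$, prove local well-posedness by contraction, establish the two conservation laws, and close with an a priori $H^1$ bound from $\mathcal H_3$ plus the smoothing of $B$. Your global-extension estimates are essentially the paper's: the cubic term is handled by exactly the chain $\int u|v|^2\le\|u\|_\infty\|v\|_2^2\le C_\alpha\|v\|_4^2\|v\|_2^2\le C_\alpha\|v\|_{H^1}\|v\|_2^3$ with Young absorption, and your $L^2$ bound on $\nabla\psi$ via Young's convolution inequality with the Bessel kernel is a physical-space version of the paper's frequency-space computation $\|\psi_x\|_2^2\le\|v\|_2^4\int(1+\alpha^2|\xi|^2)^{-2}d\xi=C\alpha^{-2}\|v\|_2^4$. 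Where you genuinely diverge is the local theory: you run the contraction directly in $\mathcal C([-T,T];H^1)$ by using (\ref{besinq})--(\ref{posinq}) to show $F:H^1\to H^1$ is locally Lipschitz, whereas the paper deliberately avoids the smoothing property at this stage and instead works in the Kato/Strichartz spaces $X=L^\infty_tL^2\cap L^4_tL^4$ and $Y$, with the contraction in the weaker $X$ metric and a separate continuous-dependence argument. Your route is simpler and gives Lipschitz dependence on the data for free, but its constants degenerate as $\alpha\to0^+$ and it does not transfer to RDS1 or RDS2, where one of the two nonlinearities is left unsmoothed; the paper's choice of the Strichartz framework is precisely what lets the same local proof cover all three systems.

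The one genuine soft spot is your treatment of the conservation of $\mathcal H_3$. The formal computation requires pairing the equation with $\bar v_t$, and in particular making sense of $\int\nabla v\cdot\nabla\bar v_t$, which is not justified for an $H^1$ solution with only $v_t\in\mathcal C(I,H^{-1})$. The paper resolves this by first proving a local well-posedness theorem in $H^2$ (so that $v_t\in\mathcal C(I,L^2)$ and the computation is legitimate), and then passing to general $H^1$ data by density together with the continuous-dependence theorem. Your proposal asserts the conservation "by differentiating in time and substituting the equations of motion" without this regularization-and-limit step; you would need to add an $H^2$ persistence result (which does hold in your framework, since the smoothed coefficients lie in $W^{2,p}$ for all $p\in(1,\infty)$) and the approximation argument to make this step rigorous.
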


\section{Proof of the global well-posedness of the $\alpha$-regularized Davey-Stewartson equations}
This section is devoted to prove the global well-posedness of the various $\alpha$-regularized Davey-Stewartson equations proposed in section \ref{sec-model}.
The proof for all the three regularized systems can be presented in a similar manner, so we only demonstrate the proof for Theorem \ref{global3}, i.e. for the system RDS3 (\ref{dsh3}) in the case: $\rho<0$ and $\beta > 0$.
As we have discussed in section \ref{sec-model}, under this scenario, both nonlinear terms $\beta |v|^2 v$ and $-\rho v \phi_x$ in the DSE (\ref{ds}) are regularized. In the following subsections, we shall study the local existence and uniqueness of solutions to (\ref{dsh3}) in $H^1$ and $H^2$, the continuous dependence on initial data in $H^1$, energy and Hamiltonian conservation, as well as the extension to global solutions in $H^1$. In order to make sure that the proof can be readily adjusted to handle the systems RDS1 (\ref{dsh1}) and RDS2 (\ref{dsh2}) as well, we intentionally avoid using the smoothing property of the $\alpha$-regularization operator in justifying the local well-posedness. The smoothing property is solely used when studying the extension to global solutions.

\subsection{Short-time existence and uniqueness of solutions in $H^1$}   \label{short}
We follow the approach in \cite{ghidaglia,kat} to establish short-time existence and uniqueness of solutions to the RDS3 system (\ref{dsh3}) by using a fixed-point argument. In particular, we will prove the following theorem:
\begin{theorem}  \label{localf1}
Let $v_0\in H^1$, then there exists a unique solution of the RDS3 system (\ref{dsh3}) on $I=[0,T]$, for some $T(\|v_0\|_{H^1})>0$, such that $v \in {\cal C}(I,H^1) \cap \, {\cal C}^1(I,H^{-1})$, and $\nabla \varphi \in {\cal C}(I,W^{4,p})$, for $p>1$. Moreover, the energy ${\cal N}(v)=\|v\|_2^2$ is conserved on $[0,T]$.
\end{theorem}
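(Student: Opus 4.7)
The plan is to rewrite \eqref{dsh3} as a single semilinear Schr\"odinger-type equation for $v$ and to solve it by a Banach fixed-point argument at the $H^1$ level, in the spirit of \cite{ghidaglia} and \cite{Cao1}. Using the operators from Section~2, the auxiliary fields are eliminated by setting $u=B(|v|^2)$ and iterating the equations for $\psi$ and $\varphi$, yielding $\varphi_x=B(E(B(|v|^2)))$, so that the first equation of \eqref{dsh3} becomes
\begin{align*}
i v_t + \Delta v + F(v)=0,\qquad F(v):=\beta\, B(|v|^2)\,v - \rho\, B\bigl(E(B(|v|^2))\bigr)\,v.
\end{align*}
Letting $S(t)=e^{it\Delta}$ denote the free Schr\"odinger group (unitary on every $H^s(\mathbb{R}^2)$), the equation is recast in Duhamel form as the fixed-point problem
\begin{align*}
v(t) = \Phi(v)(t):=S(t)v_0 + i\int_0^t S(t-s)\, F(v(s))\,ds.
\end{align*}

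For the contraction argument I would work in the Strichartz space $Y_T:=\mathcal{C}(I;H^1)\cap L^q(I; W^{1,r})$, where $(q,r)$ is a two-dimensional Schr\"odinger-admissible pair such as $(4,4)$, whose H\"older dual is $(q',r')=(4/3,4/3)$. Standard Strichartz inequalities give
\begin{align*}
\|\Phi(v)\|_{Y_T}\leq C\|v_0\|_{H^1} + C\|F(v)\|_{L^{q'}(I; W^{1,r'})}.
\end{align*}
The heart of the proof is the trilinear estimate of the nonlinearity in this dual norm, carried out without invoking the smoothing property \eqref{besinq}. For this it suffices that $B$ is a Fourier multiplier with positive Bessel kernel (so it commutes with derivatives and acts boundedly on every $L^p$, $1\le p\le\infty$) and that $E$ is $L^p$-bounded for $1<p<\infty$ via \eqref{posinq}. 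Expanding $\nabla F(v)$ by the product rule produces expressions trilinear in $v$, $\bar v$ and $\nabla v$ composed with $B$, $E$, or $B\circ E\circ B$; shedding these operators in Lebesgue norms, applying H\"older in space, and using the Gagliardo--Nirenberg inequalities \eqref{inq1}--\eqref{inq5} to interpolate between $L^2$ and $W^{1,r}$ yields an estimate of the form
\begin{align*}
\|F(v)\|_{L^{q'}(I; W^{1,r'})}\leq C\, T^{\theta}\,\|v\|_{Y_T}^3
\end{align*}
for some $\theta>0$ harvested from H\"older in time; this reflects the $H^1$-subcriticality of the cubic-type nonlinearity in two dimensions.

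A parallel estimate for $F(v)-F(w)$ gives Lipschitz continuity with the same $T^{\theta}$ gain. Taking $R=2C\|v_0\|_{H^1}$ and $T=T(\|v_0\|_{H^1})>0$ small enough, $\Phi$ maps the closed ball $\overline{\mathbb{B}}_R\subset Y_T$ into itself and is a strict contraction there; its unique fixed point is the local solution $v\in\mathcal{C}(I;H^1)$. The regularity $v\in\mathcal{C}^1(I;H^{-1})$ then follows directly from $i v_t=-\Delta v - F(v)$, and conservation of $\mathcal{N}(v)=\|v\|_2^2$ is obtained by the standard duality computation: pairing the equation with $\bar v$ and taking the imaginary part, the cubic-type terms are real and drop out, so $\tfrac{d}{dt}\|v(t)\|_2^2=0$.

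The auxiliary regularity $\nabla\varphi\in\mathcal{C}(I; W^{4,p})$ is extracted only at the end, and is the one step that invokes the smoothing \eqref{besinq}: $v\in\mathcal{C}(I;H^1)\hookrightarrow\mathcal{C}(I;L^p)$ for every $p\in[2,\infty)$ by \eqref{inq3}, so $|v|^2\in\mathcal{C}(I;L^p)$; hence $u=B(|v|^2)\in\mathcal{C}(I;W^{2,p})$ by \eqref{besinq}; then $\nabla\psi\in\mathcal{C}(I;W^{2,p})$ by the $L^p$-boundedness of the zero-order multipliers that produce $\psi_x$ and $\psi_y$ from $u$; and a further application of \eqref{besinq} promotes $\nabla\varphi=B(\nabla\psi)$ to $\mathcal{C}(I;W^{4,p})$. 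The principal obstacle throughout is the trilinear estimate on $F(v)$: one must choose the Strichartz exponents and the interpolation carefully so that, after peeling off $B$, $E$ and $B\circ E\circ B$ via their Lebesgue-space bounds, the three occurrences of $v$ and $\nabla v$ are all absorbed into the $Y_T$-norm with a net positive power of $T$ left in front.
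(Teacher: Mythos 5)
Your proposal follows essentially the same route as the paper: the same reduction to $iv_t+\Delta v+F(v)=0$ with $F(v)=\beta B(|v|^2)v-\rho B(E(B(|v|^2)))v$, the same Duhamel fixed-point argument in the $(4,4)$ Strichartz space at the $H^1$ level with a $T^{\theta}$ gain from H\"older in time, the same duality computation for $L^2$ conservation, and the same elliptic bootstrapping for $\nabla\varphi\in\mathcal{C}(I,W^{4,p})$. The only cosmetic differences are that the paper obtains the gradient estimate on $F$ via spatial translation differences rather than the product rule, and runs the contraction in the weaker $L^2$-level metric on a ball of the $H^1$-level space rather than in the full norm; both variants work here.
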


To begin with, by using the operators $B$ and $E$ defined in (\ref{def-B}) and (\ref{eop}), respectively, we write the RDS3 system (\ref{dsh3}) as
\begin{align}  \label{dsh1'}
i v_t + \Delta v + F(v)=0,
\end{align}
where the nonlinearity
\begin{equation}
F(v)=\beta B(|v|^2)v-\rho B(E(B(|v|^2))) v, \label{f1}
\end{equation}
where $\beta>0$ and $\rho<0$. Next, by Duhamel's principle, we convert equation (\ref{dsh1'}) into an equivalent integral equation
 \begin{equation}
  v(t)=G_0v_0+i G \circ   F(v) \label{int1}
 \end{equation}
 where $G_0,$ $G$ are linear operators given by
\begin{equation}
(G_0w)(t) = e^{it \Delta}w,\hspace{0.5cm}  (Gf)(t)=\int_{0}^t  e^{i(t-s)\Delta} f(s)\, ds.\label{G0G1}
\end{equation}
Some well-known properties of the operators $G_0$ and $G$ are given in the Appendix A.

Before proving Theorem \ref{localf1}, we will study the properties of the maps $F$ and $G \circ F$.
Set $\mathbf {z}=(x,y)\in \mathbb R^2$, and let $t\in [0,T]$. We introduce the following function spaces:
 \begin{align}   \label{space-X}
 X=L^{\infty}_t L^2_{\mathbf z} \cap  L^4_t L^4_{\mathbf z} \text{\;\;and\;\;}  X_0=L^{\infty}_t L^2_{\mathbf z} \cap  L^{\infty}_t L^4_{\mathbf z} \subset X,
 \end{align}
 with their relevant norms
 \begin{align*}
 \|v\|_{X}=\max\{\|v\|_{2,\infty},\|v\|_{4,4} \}  \text{\;\;and\;\;}    \|v\|_{X_0}=\max \{\|v\|_{2,\infty},\|v\|_{4,\infty}\}.
 \end{align*}
Also, we denote by $B_R(X_0)$ the closed ball in $X_0$, with center at 0 and radius $R$, i.e.,
\begin{align*}
B_R(X_0) =  \{v\in X_0: \|v\|_{X_0}   \leq R  \}.
\end{align*}

The following result states some properties of the nonlinear operator $G \circ F$.
\begin{proposition} \label{Gf1}
Let $T>0$ be given. The nonlinear operator $G \circ  F: X_0 \rightarrow X$ is bounded and locally Lipschitz continuous. Moreover, on each ball $B_R(X_0)$, $G \circ F$  is a contraction mapping in the metric of $X$, provided $T$ is sufficiently small.
\end{proposition}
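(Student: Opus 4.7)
The plan is to rely on the Strichartz-type estimates for the free Schr\"odinger group collected in Appendix A --- in particular, the fact that $(4,4)$ is an admissible pair in dimension two, which yields $\|G_0 w\|_X \le C \|w\|_2$ and, more importantly, $\|G f\|_X \le C \|f\|_{4/3,4/3}$ via duality --- together with the $L^p$-mapping properties \eqref{L2b}, \eqref{besinq}, \eqref{L2bb}, and \eqref{posinq} of the operators $B$ and $E$. Thus the whole problem reduces to estimating $F(v)$ and $F(u) - F(v)$ in the dual Strichartz space $L^{4/3}_t L^{4/3}_{\mathbf z}$, which in turn is a matter of a few applications of H\"older's inequality combined with the $L^2$-boundedness of $B$ and $E$.

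The first step is a pointwise-in-time trilinear bound. By H\"older's inequality in space, with exponents $2$ and $4$,
\begin{equation*}
\|B(|v|^2)\,v\|_{4/3} \le \|B(|v|^2)\|_2 \, \|v\|_4,
\end{equation*}
and \eqref{L2b} gives $\|B(|v|^2)\|_2 \le \||v|^2\|_2 = \|v\|_4^2$, so $\|B(|v|^2)\,v\|_{4/3} \le \|v\|_4^3$. The compound term $B(E(B(|v|^2)))\,v$ is bounded identically, since the $L^2$-mapping properties \eqref{L2b} and \eqref{L2bb} combine to yield $\|B(E(B(|v|^2)))\|_2 \le \|v\|_4^2$. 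Integrating the resulting pointwise estimate in time and using the $L^\infty_t L^4$ component of the $X_0$-norm produces
\begin{equation*}
\|F(v)\|_{4/3,4/3} \le C \Bigl(\int_0^T \|v(t)\|_4^4 \, dt\Bigr)^{3/4} \le C\, T^{3/4}\, \|v\|_{X_0}^{3},
\end{equation*}
whence $\|G\circ F(v)\|_X \le C\, T^{3/4}\, R^3$ on the ball $B_R(X_0)$, establishing boundedness of $G \circ F : X_0 \to X$.

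For the Lipschitz/contraction property, I would write
\begin{equation*}
B(|u|^2)u - B(|v|^2)v = B(|u|^2 - |v|^2)\, u + B(|v|^2)(u - v),
\end{equation*}
use the elementary bound $\bigl||u|^2 - |v|^2\bigr| \le (|u|+|v|)\,|u-v|$ followed by \eqref{L2b} and H\"older in space, and handle the nonlocal term of $F$ in exactly the same way. The outcome is the pointwise estimate
\begin{equation*}
\|F(u)(t) - F(v)(t)\|_{4/3} \le C\bigl(\|u(t)\|_4^2 + \|v(t)\|_4^2\bigr)\|u(t)-v(t)\|_4 .
\end{equation*}
Bounding $\|u(t)\|_4$ and $\|v(t)\|_4$ by $R$ via the $X_0$ hypothesis, and then using H\"older's inequality in time to estimate $\int_0^T \|u-v\|_4^{4/3}\, dt \le T^{2/3}\|u-v\|_{4,4}^{4/3}$, one arrives at
\begin{equation*}
\|F(u) - F(v)\|_{4/3,4/3} \le C\, R^2\, T^{1/2}\, \|u-v\|_{4,4} \le C\, R^2\, T^{1/2}\, \|u-v\|_X.
\end{equation*}
A final application of the Strichartz bound for $G$ gives $\|G\circ F(u) - G\circ F(v)\|_X \le C\, R^2\, T^{1/2}\, \|u-v\|_X$, so any choice of $T$ with $C R^2 T^{1/2} < 1$ converts this into a contraction in the $X$-metric.

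The only delicate point is the asymmetric use of the two norms: the $X_0$-control has to be placed on $u$ and $v$ so as to absorb the quadratic factor in $L^\infty_t$, while the difference $u - v$ is measured in the weaker $X$-norm $\|\cdot\|_{4,4}$ --- this is precisely what will be needed downstream to close the fixed-point argument in Theorem \ref{localf1}. I would also note, in line with the comment opening Section 4, that only the plain $L^2$-boundedness of $B$ and $E$ is invoked above and not their smoothing effect \eqref{besinq}, so that the same proof applies verbatim to the systems RDS1 and RDS2.
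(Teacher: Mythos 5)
Your proposal is correct and follows essentially the same route as the paper: the same H\"older-plus-$L^2$-boundedness estimate for $F$ in $L^{4/3}_tL^{4/3}_{\mathbf z}$, the same two-term decomposition of $F(u)-F(v)$ (applied to both the local and nonlocal parts), and the same Strichartz lemma for $G$, with the quadratic factor absorbed in $L^\infty_t L^4$ and the difference measured in $L^4_tL^4_{\mathbf z}$. The only cosmetic difference is that the paper records both branches of the time-H\"older estimate (yielding the $T^{3/4}\|v-w\|_{X_0}$ bound for local Lipschitz continuity and the $T^{1/2}\|v-w\|_X$ bound for the contraction), whereas you derive only the latter --- but the former follows at once from it via $\|v-w\|_{4,4}\le T^{1/4}\|v-w\|_{4,\infty}$.
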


 \begin{proof}
 Recall from (\ref{f1}) that $F(v)=\beta B(|v|^2)v- \rho B(E(B(|v|^2)))v$, where $\beta>0$ and $\rho<0$, and the operators $B$ and $E$ defined in (\ref{def-B}) and (\ref{eop}), respectively. By using H\"older's inequality and the properties of $B$ and $E$ given in (\ref{L2b}) and (\ref{L2bb}), respectively, we have
 \begin{eqnarray} \label{f14.3}
  \|F(v)\|_{\frac{4}{3}}&\leq& \beta \|B(|v|^2)\|_{2}\|v\|_{4}+|\rho|\|B(E(B(|v|^2)))\|_2\|v\|_{4}    \nonumber\\
                  &\leq& \beta  \||v|^2\|_{2}\|v\|_{4}+|\rho| \||v|^2\|_2\|v\|_{4}  \leq (\beta+|\rho|) \|v\|_4^3.
 \end{eqnarray}
By Lemma \ref{lgog1} in Appendix A, as well as inequality (\ref{f14.3}), we have
 \begin{align} \label{f14.3a}
 \|G \circ F(v)\|_X &=\max\{   \|G \circ F(v)\|_{2,\infty}, \|G \circ F(v)\|_{4,4}  \} \notag\\
 &\leq \gamma \|F(v)\|_{\frac{4}{3},\frac{4}{3}} \leq \gamma T^{\frac{3}{4}} (\beta+|\rho|) \|v\|_{4,\infty}^3 \leq \gamma T^{\frac{3}{4}} (\beta+|\rho|) \|v\|_{X_0}^3.
 \end{align}
 Consequently, the nonlinear operator $G \circ F: X_0 \rightarrow X$ is bounded.

Next, we show that $G \circ F$ is a continuous operator mapping from $X_0$ into $X$, and on each ball $B_R(X_0)$ the operator $G \circ F$ is a contraction mapping, with respect to the norm of $X$, provided $T$ is sufficiently small. To this end, we let  $v$, $w\in B_R(X_0)$, i.e. $\max\{\|v\|_{2,\infty},\|v\|_{4,\infty},\|w\|_{2,\infty},\|w\|_{4,\infty}\} \leq R$.
 Since $G$ is linear, we use Lemma \ref{lgog1} to obtain
 \begin{align}    \label{lip}
 \|G \circ F(v)-G \circ F(w)\|_{X}=\|G[F(v)-F(w)]\|_{X} \leq   \gamma   \|F(v)-F(w)\|_{\frac{4}{3},\frac{4}{3}}.
 \end{align}
 We decompose $\|F(v)-F(w)\|_{\frac{4}{3},\frac{4}{3}}$ as
 \begin{align}    \label{f-1}
 \|F(v)-F(w)\|_{\frac{4}{3},\frac{4}{3}} \leq \beta(I_1+I_2)+|\rho|(I_3+I_4),
 \end{align}
and claim
 \begin{align}
 & I_1:=\|B(|v|^2-|w|^2)v\|_{\frac{4}{3},\frac{4}{3}}\leq 4 R^2  \min\{T^{\frac{1}{2}} \|v-w\|_{4,4},  \;T^{\frac{3}{4}} \|v-w\|_{4,\infty}\}, \label{I1.1}\\
 & I_2:=\|B(|w|^2)(v-w)\|_{\frac{4}{3},\frac{4}{3}}\leq R^2 \min\{T^{\frac{1}{2}} \|v-w\|_{4,4},  \;T^{\frac{3}{4}} \|v-w\|_{4,\infty}\} ,\label{I1.2}\\
 & I_3:=\|B(E(B(|v|^2-|w|^2)))v\|_{\frac{4}{3},\frac{4}{3}}\leq 4 R^2 \min\{T^{\frac{1}{2}} \|v-w\|_{4,4},  \;T^{\frac{3}{4}} \|v-w\|_{4,\infty}\} ,\label{I1.3}\\
 & I_4:=\|B(E(B(|w|^2)))(v-w)\|_{\frac{4}{3},\frac{4}{3}}\leq R^2 \min\{T^{\frac{1}{2}} \|v-w\|_{4,4},  \;T^{\frac{3}{4}} \|v-w\|_{4,\infty}\}.   \label{I1.4}
 \end{align}

 All of the inequalities (\ref{I1.1})-(\ref{I1.4}) can be proved in a similar manner, so we just demonstrate the proof of (\ref{I1.3}).
 By H\" older inequality, as well as (\ref{L2b}) and (\ref{L2bb}), we have
 \begin{align*}
 I_3^{\frac{4}{3}} &\leq \int_0^T\|B(E(B(|v|^2-|w|^2)))\|_{2}^{\frac{4}{3}}\|v\|_4^{\frac{4}{3}}\,dt \notag\\
 &\leq \int_0^T\||v|^2-|w|^2\|_{2}^{\frac{4}{3}}\|v\|_4^{\frac{4}{3}} \, dt  \notag\\
 &\leq \int_0^T \||v|+|w|\|_{4}^{\frac{4}{3}} \|v-w\|_{4}^{\frac{4}{3}} \|v\|_4^{\frac{4}{3}}  \, dt \notag\\
 &\leq (\|v\|_{4,\infty}+\|w\|_{4,\infty})^{\frac{8}{3}} \min\{  T^{\frac{2}{3}} \|v-w\|_{4,4}^{\frac{4}{3}}, T \|v-w\|_{4,\infty}^{\frac{4}{3}}  \},
 \end{align*}
 which implies (\ref{I1.3}) due to the fact $\|v\|_{4,\infty}+\|w\|_{4,\infty} \leq 2R$.

Combining (\ref{f-1}) and (\ref{I1.1})-(\ref{I1.4}) gives us
\begin{align}   \label{f1b}
\|F(v)-F(w)\|_{\frac{4}{3},\frac{4}{3}} \leq 5 (\beta+|\rho|) R^2   \min\{T^{\frac{1}{2}} \|v-w\|_{4,4},  \;T^{\frac{3}{4}} \|v-w\|_{4,\infty}\}.
\end{align}
By (\ref{lip}) and (\ref{f1b}) it follows that
\begin{align}
&\|G \circ F(v)-G \circ F(w)\|_{X} \leq 5  \gamma (\beta+|\rho|)  R^2 T^{\frac{3}{4}} \|v-w\|_{X_0}, \label{li-1}\\
&\|G \circ F(v)-G \circ F(w)\|_{X} \leq 5 \gamma (\beta+|\rho|)  R^2 T^{\frac{1}{2}} \|v-w\|_X.     \label{li-2}
\end{align}
Notice that (\ref{li-1}) implies that $G \circ F:X_0 \rightarrow X$ is locally Lipschitz continuous. Also, (\ref{li-2}) shows that on each ball $B_R(X_0)$, the operator $G \circ F$ is a contraction mapping with respect to the metric of $X$, provided $T<1/(5 \gamma (\beta+|\rho|) R^2)^2$.
 \hfill $\Box$
 \end{proof}

\bigskip

Next, we introduce the following spaces:
\begin{align*}
Y=\{ v\in X: \; \nabla v  \in X \}    \subset  L^{\infty}(I,H^1), \text{\;\;where\;\;} X=L^{\infty}_t L^2_{\mathbf z}\cap L^4_t L^4_{\mathbf z},
\end{align*}
where $I=[0,T]$, with the norms
\begin{align*}
\|v\|_X= \max\{\|v\|_{2,\infty},\; \|v\|_{4,4}\} \text{\;\;and\;\;} \|v\|_{Y}=\max\{\|v\|_X,\;  \|\nabla v\|_X \}.
\end{align*}
Also, we set
\begin{align}   \label{Yprime}
Y'=\{ f\in X': \; \nabla f  \in X' \} , \text{\;\;where\;\;} X'=L^1_t L^2_{\mathbf z} + L^{\frac{4}{3}}_t L^{\frac{4}{3}}_{\mathbf z},
\end{align}
with the norms
\begin{align*}
\|f\|_{X'}=\inf\{\|g\|_{2,1}+\|h\|_{\frac{4}{3},\frac{4}{3}}:  \, f=g+h \}   \text{\;\;and\;\;}    \|f\|_{Y'}=\max\{ \|f\|_{X'},\;  \|\nabla f\|_{X'} \}.
\end{align*}

Recall the nonlinear operator $F$ is defined in (\ref{f1}) by $F(v)=\beta B(|v|^2)v -  \rho  B(E(B(|v|^2))) v $, where $\beta>0$ and $\rho<0$. Then the following result holds for $F$.
\begin{proposition} \label{pf1}
 The nonlinear operator $F: Y \rightarrow Y'$ is bounded satisfying
\begin{equation}
   \|F(v)\|_{Y'}\leq C(\beta+|\rho|) T^{\frac{3}{4}} \|v\|^3_Y, \hspace{1cm} \textrm{for\;\;} \,v\in Y. \label{f1Y1}
 \end{equation}
 \end{proposition}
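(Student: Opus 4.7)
The goal is to bound $\|F(v)\|_{Y'}=\max\{\|F(v)\|_{X'},\|\nabla F(v)\|_{X'}\}$, and the plan is to place both $F(v)$ and its spatial gradient into the $L^{4/3}_t L^{4/3}_{\mathbf z}$ summand of $X'$ (equivalently, choosing $g=0$ in the $f=g+h$ decomposition of (\ref{Yprime})). This keeps the entire estimate within the $L^2$-contraction regime of $B$ and $E$, avoiding the $W^{2,p}$-smoothing (\ref{besinq}), in keeping with the section's stated principle.

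For the first piece, $\|F(v)\|_{4/3,4/3}$, the argument is a time-integrated version of the already established pointwise-in-$t$ bound (\ref{f14.3}), $\|F(v)\|_{4/3}\le(\beta+|\rho|)\|v\|_4^3$. Raising to the $4/3$-th power and integrating on $[0,T]$ controls this by $C(\beta+|\rho|)\|v\|_{4,4}^3$. The factor $T^{3/4}$ is then produced by first invoking the Sobolev embedding (\ref{inq3}) to get $\|v\|_4\le C\|v\|_{H^1}$, then the continuous embedding $Y\hookrightarrow L^\infty_t H^1$ to obtain $\|v\|_{4,\infty}\le C\|v\|_Y$, and finally $\|v\|_{4,4}\le T^{1/4}\|v\|_{4,\infty}\le CT^{1/4}\|v\|_Y$.

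The main work is bounding $\|\nabla F(v)\|_{4/3,4/3}$. Since $B$ and $E$ are constant-coefficient Fourier multipliers, they commute with $\nabla$, and the Leibniz rule together with $\nabla|v|^2=2\operatorname{Re}(\bar v\nabla v)$ yields
\begin{align*}
\nabla F(v)=\,&\beta\bigl[B(|v|^2)\nabla v+2B(\operatorname{Re}(\bar v\nabla v))\,v\bigr]\\
&-\rho\bigl[B(E(B(|v|^2)))\nabla v+2B(E(B(\operatorname{Re}(\bar v\nabla v))))\,v\bigr].
\end{align*}
Each of the four resulting summands is then treated by the same recipe: H\"older in space with exponents $(2,4)$ turns it into a product of the form $\|\cdot\|_2\,\|\cdot\|_4$, and the $L^2$-contraction properties (\ref{L2b}) and (\ref{L2bb}) of $B$ and $E$ absorb those operators, leaving the pointwise-in-$t$ bound $\|\nabla F(v)\|_{4/3}\le C(\beta+|\rho|)\,\|v\|_4^2\,\|\nabla v\|_4$. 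A H\"older split in time, combined with the Sobolev trade used in the first part applied to the $v$-factors, then extracts the desired $T^{3/4}$ factor.

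The one delicate point is the commutator and Leibniz bookkeeping: one must verify that $\nabla$ can be pushed past the nested operator $B\circ E\circ B$ (immediate, since each factor is a translation-invariant Fourier multiplier), and that the derivative of $|v|^2=v\bar v$ is expanded as the bilinear $2\operatorname{Re}(\bar v\nabla v)$, which by Cauchy--Schwarz sits in $L^2$ given $v,\nabla v\in L^4$. This is precisely what allows every appearance of $B$ and $E$ to be treated purely via the contraction estimates (\ref{L2b}) and (\ref{L2bb}), sidestepping the $\alpha$-dependent smoothing bound (\ref{besinq}). Once this structural step is in place, the remaining calculation is a direct extension of the argument from Proposition \ref{Gf1}.
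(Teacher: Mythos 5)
Your proof is correct, and the first half (the bound on $\|F(v)\|_{X'}$ via (\ref{f14.3}) and $\|v\|_{4,4}\le T^{1/4}\|v\|_{4,\infty}\le CT^{1/4}\|v\|_Y$) coincides with the paper's. For the gradient piece, however, you take a genuinely different route: you differentiate $F(v)$ explicitly, commuting $\nabla$ past the Fourier multipliers $B$ and $E$ and expanding $\nabla|v|^2=2\operatorname{Re}(\bar v\nabla v)$, then estimate the four resulting terms by H\"older and the $L^2$-contractions (\ref{L2b}), (\ref{L2bb}). The paper instead avoids computing $\nabla F(v)$ at all: it applies the already-established Lipschitz bound (\ref{f1b}) to the spatial translates, $\|F(\tau_h v)-F(v)\|_{\frac43,\frac43}\le 5(\beta+|\rho|)T^{\frac12}\|v\|_{X_0}^2\|(\tau_h-\mathrm{Id})v\|_{4,4}$, divides by $|h|$ and lets $|h|\to0$. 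The difference-quotient argument recycles Proposition \ref{Gf1} and sidesteps the Leibniz/commutation bookkeeping (and the question of in what sense $\nabla|v|^2$ exists for $v\in Y$), while your direct computation is more transparent about where each factor of $v$ and $\nabla v$ lands; both hinge on exactly the same $L^2$-contraction properties and avoid (\ref{besinq}), as intended. One small caveat, which you share with the paper itself: the gradient term only yields a factor $T^{\frac12}\|\nabla v\|_{4,4}$ (since $\|\nabla v\|_{4,\infty}$ is not controlled by $\|v\|_Y$), exactly as in (\ref{tau2}), so the honest exponent in (\ref{f1Y1}) is $T^{\frac12}$ rather than $T^{\frac34}$ for $T\le1$; this is immaterial for the contraction argument, which only needs some positive power of $T$, but your closing claim of "extracting $T^{3/4}$" for the gradient piece cannot actually be achieved with the available norms.
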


 \begin{proof}
 Let $v\in Y$, i.e., $v\in X$ with $\nabla v\in X$. We aim to show that $F(v)\in X'$ and $\nabla F(v)\in X'$ such that $\max\{\|F(v)\|_{X'},\; \|\nabla F(v)\|_{X'}    \}\leq C(\beta+|\rho|) T^{\frac{3}{4}} \|v\|_Y^3  $.
 By virtue of (\ref{f14.3}), one has $F(v)\in L_t^{\frac{4}{3}}L_{\bf z}^{\frac{4}{3}}$ such that
 \begin{align}   \label{f10}
  \|F(v)\|_{\frac{4}{3},\frac{4}{3}} \leq T^{\frac{3}{4}} (\beta +|\rho| ) \|v\|_{4,\infty}^3
  \leq T^{\frac{3}{4}} (\beta+|\rho|) \|v\|_{X_0}^3.
 \end{align}
 Notice that $Y\subset L^{\infty}(I, H^1) \subset X_0=L^{\infty}_t L^2_{\mathbf z} \cap L^{\infty}_t L^4_{\mathbf z}$ due to the imbedding $H^1 \hookrightarrow L^4$.
 Thus $\|v\|_{X_0}\leq C\|v\|_Y$, and along with (\ref{f10}), we deduce
 \begin{align} \label{f1x}
 \|F(v)\|_{X'} \leq \|F(v)\|_{\frac{4}{3},\frac{4}{3}} \leq T^{\frac{3}{4}} (\beta + |\rho| )  \|v\|_{X_0}^3  \leq  C  T^{\frac{3}{4}} (\beta + |\rho| )  \|v\|_{Y}^3.
 \end{align}

 Next, we show that $\nabla F(v)\in X'$. We denote $\tau_h$ the spatial translation by $h\in \mathbb{R}^2$, i.e., $\tau_h v(x)=v(x+h)$. Note that the function spaces considered are translation invariant in spatial variables. Denote the identity operator by Id, then applying (\ref{f1b}) gives us
 \begin{eqnarray}  \label{tau1}
 \|(\tau_h-\text{Id})F(v)\|_{\frac{4}{3}, \frac{4}{3}}&=&\|F(\tau_h v)-F(v)\|_{\frac{4}{3}, \frac{4}{3}}\nonumber\\
 &\leq& 5 (\beta + |\rho|) T^{\frac{1}{2}} \|v\|_{X_0}^2 \|(\tau_h-\text{Id})v\|_{4,4}.
 \end{eqnarray}
Now, dividing (\ref{tau1}) by $|h|$, and then taking the limit as $|h|\rightarrow0$ gives
 \begin{eqnarray}  \label{tau2}
\|\nabla F(v)\|_{X'} \leq \|\nabla F(v)\|_{\frac{4}{3}, \frac{4}{3}} \leq 5 (\beta + |\rho|) T^{\frac{1}{2}} \|v\|_{X_0}^2 \|\nabla  v\|_{4,4}
\leq C(\beta + |\rho| ) T^{\frac{1}{2}} \|v\|_{Y}^3.
 \end{eqnarray}
 Estimate (\ref{f1Y1}) follows from equations (\ref{f1x}) and (\ref{tau2}).
\hfill $\Box$
\end{proof}

In order to prove Theorem \ref{localf1}, for each $v_0\in H^1$, we define operator $\mathcal T: Y \rightarrow Y$ by
\begin{equation*}
\mathcal T(v)=G_0v_0+i G \circ F(v).
\end{equation*}
Since $v_0\in H^1$, we have $G_0v_0\in Y$ due to Lemma \ref{lgbound}. Then, we define
\begin{align}
B_R(G_0 v_0,Y)=\{v\in Y : \|v- G_0 v_0\|_Y  \leq R \}.
\end{align}
The following result states a contraction mapping property of $\mathcal T$.
\begin{lemma} \label{lphe1}
Let $v_0\in H^1$ and $R>0$ be fixed. Then there exists $T (\|v_0\|_{H^1}, R)>0$ sufficiently small so that $\mathcal T: B_R(G_0 v_0,Y)\rightarrow B_R(G_0 v_0,Y)$ is a contraction mapping in the metric of the space $X$.
\end{lemma}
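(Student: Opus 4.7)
The plan is to verify the hypotheses of Banach's fixed point theorem on the set $B_R(G_0 v_0, Y)$ equipped with the weaker metric induced by $X$. Three items must be addressed: the invariance $\mathcal{T}(B_R(G_0 v_0, Y)) \subset B_R(G_0 v_0, Y)$, the strict contractivity of $\mathcal{T}$ in the $X$-distance, and completeness of $B_R(G_0 v_0, Y)$ under the $X$-metric. The last point is a standard weak-$\ast$ compactness argument: an $X$-Cauchy sequence that is uniformly bounded in $Y$ has a weak limit sitting in $Y$ which coincides with its $X$-limit, so the limit remains in the $Y$-ball.

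For the self-mapping property, note that $\mathcal{T}(v) - G_0 v_0 = i\, G \circ F(v)$, so it suffices to control $\|G \circ F(v)\|_Y$. Combining the estimate $\|F(v)\|_{Y'} \leq C(\beta + |\rho|)\, T^{3/4} \|v\|_Y^3$ from Proposition \ref{pf1} with the $Y' \to Y$ boundedness of the Duhamel operator $G$ (the $H^1$-level Strichartz estimate applied to $v$ and to $\nabla v$, recorded in Appendix~A), I obtain
\[
\|G \circ F(v)\|_Y \leq C(\beta + |\rho|)\, T^{3/4} \|v\|_Y^3.
\]
For $v \in B_R(G_0 v_0, Y)$, the triangle inequality and Lemma \ref{lgbound} give $\|v\|_Y \leq R + \|G_0 v_0\|_Y \leq R + C\|v_0\|_{H^1}$. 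Then choosing $T$ so small that $C(\beta + |\rho|)\, T^{3/4} (R + C\|v_0\|_{H^1})^3 \leq R$ secures the invariance.

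For the $X$-contraction, I would reuse estimate (\ref{li-2}) from the proof of Proposition \ref{Gf1}. Given $v, w \in B_R(G_0 v_0, Y)$, the embedding $H^1 \hookrightarrow L^4$ yields $\|v\|_{X_0}, \|w\|_{X_0} \leq C\|v\|_Y \leq C(R + \|G_0 v_0\|_Y) =: R'$. Plugging $R'$ in place of $R$ in (\ref{li-2}) produces
\[
\|\mathcal{T}(v) - \mathcal{T}(w)\|_X \leq 5\gamma(\beta + |\rho|)(R')^2\, T^{1/2}\, \|v - w\|_X,
\]
and shrinking $T$ further so that $5\gamma(\beta + |\rho|)(R')^2\, T^{1/2} < 1$ closes the contraction argument.

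The main obstacle is not any single inequality but rather the asymmetry between the space in which the ball is defined ($Y$) and the weaker metric in which the contraction is established ($X$). Justifying completeness of the $Y$-ball in the $X$-topology, and, downstream, confirming that the $X$-limit fixed point inherits the $Y$-regularity so that the Duhamel identity is genuinely satisfied in $Y$, is the delicate conceptual step; once this framework is in place, the quantitative work reduces to invoking the already-proven Propositions \ref{Gf1} and \ref{pf1} together with the Strichartz-type bounds for $G_0$ and $G$ from Appendix~A.
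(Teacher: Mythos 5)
Your proposal is correct and follows essentially the same route as the paper: the self-mapping property via Proposition \ref{pf1} combined with the $Y'\to \bar Y$ bound for $G$ and the triangle inequality $\|v\|_Y\le R+c\|v_0\|_{H^1}$, and the contraction via the $X_0$-bound $R_1$ and estimate (\ref{li-2}) from Proposition \ref{Gf1}. Your additional remarks on the completeness of the $Y$-ball in the $X$-metric and the regularity of the fixed point belong to the subsequent proof of Theorem \ref{localf1} rather than to this lemma, but they are consistent with how the paper handles those points.
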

\begin{proof}
Let $v\in B_R(G_0 v_0,Y)$, then by Lemma \ref{lgog1}, Proposition \ref{pf1} and Lemma \ref{lgbound}, we deduce
\begin{align*}
&\|\mathcal T(v)-G_0 v_0\|_Y=\|G \circ F(v)\|_Y \leq \gamma \|F(v)\|_{Y'}\leq \gamma C(\beta+|\rho|) T^{\frac{3}{4}} \|v\|^3_Y \notag\\
&\;\;\;\; \leq C(\beta+|\rho|) T^{\frac{3}{4}}(\|v-G_0 v_0\|_Y+\|G_0 v_0\|_Y)^3 \\
&\;\;\;\; \leq  C(\beta+|\rho|) T^{\frac{3}{4}}(R+\|G_0 v_0\|_Y)^3 \leq C(\beta+|\rho|) T^{\frac{3}{4}}(R+c\|v_0\|_{H^1})^3 <R,
\end{align*}
provided $T$ is sufficiently small, i.e., $T < [R C^{-1} (\beta+|\rho|)^{-1} (R+c\|v_0\|_{H^1})^{-3}]^{\frac{4}{3}} $. This shows that $\mathcal T$ maps $B_R(G_0 v_0,Y)$ into $B_R(G_0 v_0,Y)$, if $T$ is sufficiently small.

Next, we show that $\mathcal T: B_R(G_0 v_0,Y) \rightarrow B_R(G_0 v_0,Y)$ is a contraction mapping. Let $v\in B_R(G_0 v_0,Y)$, i.e., $\|v-G_0 v_0\|_Y\leq R$. It follows that
\begin{align}    \label{R1}
\|v\|_{X_0} &\leq \|v-G_0 v_0\|_{X_0}+\|G_0 v_0\|_{X_0} \notag \\
&\leq C(\|v-G_0 v_0\|_Y+ \|G_0 v_0\|_Y) \leq C(R + c\|v_0\|_{H^1}) =: R_1,
\end{align}
which shows that $v\in B_{R_1} (X_0)=\{v\in X_0: \|v\|_{X_0} \leq R_1 \}$. By Proposition \ref{Gf1}, $G \circ F$ is a contraction mapping on $B_{R_1} (X_0)$ in the metric of $X$ provided $T$ is sufficiently small. Moreover, it follows that $\mathcal T: B_R(G_0 v_0,Y)  \rightarrow B_R(G_0 v_0,Y)$ is a contraction mapping with respect to the metric of $X$, provided $T$ is small enough.
\hfill $\Box$
\end{proof}

Finally we complete the proof of Theorem \ref{localf1} as follows.

\begin{proof}
We recognize that $B_R(G_0 v_0,Y)$ with respect to $X$-metric is a complete metric space, so by virtue of Lemma \ref{lphe1} and the Contraction Mapping Theorem, we obtain that $\mathcal T$ has a unique fixed point $v\in Y$. Consequently, $v=\mathcal T(v)$ is the unique solution of (\ref{int1}) in the space $Y$, provided $T$ is small enough.

Next, we show that the solution $v\in {\cal C}(I,H^1)$. Indeed, if we introduce the spaces
\begin{align}    \label{Ybar}
\bar Y= \{v\in \bar X, \;  \nabla v\in \bar X \}  \subset {\cal C}(I, H^1), \text{\;\;where\;\;}   \bar X= {\cal C}(I, L^2_{\bf z})  \cap L^4_t  L^4_{
\bf z},
\end{align}
then by Lemma \ref{lgbound} and Proposition \ref{pf1}, we obtain that $G_0 v_0 \in \bar Y$ since $v_0\in H^1$, and $G \circ F(v)\in \bar Y$ since $v\in Y$, and it follows that $v=\mathcal T(v) =  G_0 v_0 + i G \circ F(v) \in \bar Y  \subset {\cal C}(I, H^1)$. By the equation (\ref{dsh1}) we also have
$v_t \in \mathcal C(I, H^{-1})$.

Moreover, we claim that $\nabla \varphi \in {\cal C}(I,W^{4,p})$ for $p>1$. Indeed, since $\varphi_x=B(E(B(|v|^2)))$ and $v\in {\cal C}(I, H^1)$, we obtain that $\varphi_x \in {\cal C}(I, W^{4,p})$, for $p>1$, by using (\ref{inq3}), (\ref{besinq}), and (\ref{posinq}). A similar argument works for $\varphi_y$.

Finally we prove the conservation of the energy $\mathcal N(v)=\|v\|_2^2$. Since $v \in {\cal C}(I,H^1)\cap \, {\cal C}^1(I,H^{-1})$ and $\nabla \varphi \in {\cal C}(I,W^{4,p})$ for $p>1$, we can take the duality pairing of the RDS3 (\ref{dsh3}) with $\bar v$, and it follows that
\begin{align}   \label{ene}
i \langle v_t , \bar v \rangle_{H^{-1}\times H^1} = -   \|\nabla v\|_2^2 - \beta \int_{\mathbb R^2} u |v|^2 dx dy + \rho \int_{\mathbb R^2} \varphi_x |v|^2 dx dy \,.
\end{align}
Notice that the right-hand side of (\ref{ene}) is a real number, thus we take the imaginary part of both sides of (\ref{ene}). Then
\begin{align*}
\text{Re}\, \langle v_t , \bar v \rangle_{H^{-1}\times H^1} =  \frac{1}{2} \frac{d}{dt} \|v\|_2^2 = 0.
\end{align*}
This shows that the energy $\|v\|_2^2$ is invariant in time.

\hfill $\Box$
\end{proof}

\subsection{Continuous dependence on initial data in $H^1$}
This subsection is devoted to prove that the map $v_0 \mapsto (v,\nabla \varphi)$ is continuous from $H^1$ into $\mathcal C(I, H^1) \times \mathcal C(I, W^{4,p})$, for $p>1$, for system (\ref{dsh3}). More precisely, we have the following result.
\begin{theorem}   \label{depend}
Let $v\in \mathcal C(I, H^1)$ and $\nabla \varphi \in \mathcal C(I, W^{4,p})$, for $p>1$, be the solution of the RDS3 system (\ref{dsh3}) with the initial data $v(0)=w\in H^1$. Let $w_n \rightarrow w$ in $H^1$ and $(v_n, \nabla \varphi_n)$ be the solution of (\ref{dsh3}) with the initial value $v_n(0)=w_n$. Then $(v_n,\nabla \varphi_n)$ is defined on $I=[0,T]$, for sufficiently large $n$. Moreover, $v_n \rightarrow v$ in $\mathcal C(I, H^1)$ and $\nabla \varphi_n \rightarrow \nabla \varphi$ in $\mathcal C(I, W^{4,p})$, for $p>1$.
\end{theorem}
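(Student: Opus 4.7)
The plan is to reuse the contraction framework of Section~4.1 -- the integral equation (\ref{int1}), the nonlinear map $F$ of (\ref{f1}), and the function space $Y$. The strategy is to secure a common existence interval for the $v_n$, upgrade the $X$-Lipschitz estimate of Proposition \ref{Gf1} to a full $Y$-Lipschitz estimate for $F$, and then apply it directly to the integral equation satisfied by $v_n-v$.

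Since $w_n\to w$ in $H^1$, the norms $\|w_n\|_{H^1}$ are uniformly bounded, and the existence time supplied by Theorem \ref{localf1} depends only on this bound. Hence for all sufficiently large $n$ the solutions $v_n$ are defined on the common interval $I=[0,T]$ and remain in a common ball $B_R(G_0 w,Y)$. In particular $\{v_n\}$ is uniformly bounded in $L^\infty(I,H^1)$.

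The heart of the argument is an $H^1$-Lipschitz estimate
\begin{equation*}
\|F(v)-F(w)\|_{Y'} \;\le\; C(R)\,T^{1/2}\,\|v-w\|_Y, \qquad v,w \in B_R(0,Y),
\end{equation*}
which complements the growth bound of Proposition \ref{pf1}. To prove it I would use the four-term decomposition of $F(v)-F(w)$ from Proposition \ref{Gf1} and Leibniz-differentiate to produce eight additional pieces for $\nabla(F(v)-F(w))$; each of the twelve resulting terms is estimated in $L^{4/3}_tL^{4/3}_{\mathbf z}$ by combining H\"older, the Helmholtz and Calder\'on--Zygmund bounds (\ref{L2b})--(\ref{posinq}), and the embedding $Y\hookrightarrow L^\infty_tL^q_{\mathbf z}$ (a consequence of $H^1\hookrightarrow L^q$ from (\ref{inq3})), extracting a small factor of $T^{1/2}$ together with the desired $\|v-w\|_Y$. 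Combined with Lemma \ref{lgog1} and Lemma \ref{lgbound} applied to the subtracted integral equation, this yields
\begin{equation*}
\|v_n-v\|_Y \;\le\; c\,\|w_n-w\|_{H^1} \;+\; \gamma\,C(R)\,T^{1/2}\,\|v_n-v\|_Y,
\end{equation*}
so that choosing $T$ small enough so that $\gamma C(R)T^{1/2}\le 1/2$ absorbs the second term on the left and produces $\|v_n-v\|_Y \le 2c\|w_n-w\|_{H^1}\to 0$. Iterating over finitely many subintervals covers all of $I$ and delivers $v_n\to v$ in $\bar Y \subset \mathcal C(I,H^1)$.

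Finally, since $\varphi_{n,x}=B(E(B(|v_n|^2)))$ and analogously for $\varphi_{n,y}$, the $L^p$-bounds (\ref{besinq}), (\ref{posinq}) together with the embedding (\ref{inq3}) make the map $v\mapsto\nabla\varphi$ continuous from $H^1$ into $W^{4,p}$ for every $p>1$; the $\mathcal C(I,H^1)$-convergence of $v_n$ therefore transfers to $\nabla\varphi_n\to\nabla\varphi$ in $\mathcal C(I,W^{4,p})$. I expect the main obstacle to be the $Y$-Lipschitz estimate above: the growth bound of Proposition \ref{pf1} alone is not sharp enough, and the Leibniz expansion of $\nabla(F(v)-F(w))$ requires redoing, more carefully, the H\"older balancing of Propositions \ref{Gf1}--\ref{pf1} so that every term carries both the correct $R$-scaling and a small factor of $T$ that allows the absorption argument to close.
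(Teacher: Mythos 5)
Your proposal is correct and follows essentially the same route as the paper: a common existence interval from the uniform $H^1$-bound on $w_n$, the subtracted Duhamel equation, a $T^{1/2}$-small Lipschitz estimate for $F$ in the $Y$-norm obtained by the four-term decomposition plus Leibniz differentiation of each piece (the paper's terms $\tilde I_1$--$\tilde I_4$), absorption for small $T$ followed by iteration over subintervals, and transfer of the convergence to $\nabla\varphi_n$ via the bounds (\ref{besinq}) and (\ref{posinq}). The only cosmetic difference is that the paper keeps the zeroth-order $X$-estimate (via (\ref{li-2})) and the gradient estimate separate before combining them, rather than packaging them as a single $Y'\to Y$ Lipschitz bound for $F$.
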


\begin{proof}
The proof adopts the idea in \cite{kat}. Let $w\in H^1$. By Theorem \ref{localf1}, there exists a unique solution $(v,\nabla \varphi)$ of the RDS3 system (\ref{dsh3}), on $I=[0,T]$, with the initial value $v(0)=w$, such that $v\in \mathcal C(I, H^1)\cap \mathcal C^1(I, H^{-1})$ and $\nabla \varphi \in \mathcal C(I, W^{4,p})$, for $p>1$. Let $\{w_n\}\subset H^1$ be a sequence of functions such that $w_n \rightarrow w$ in $H^1$. Then there exists a sequence of solutions $(v_n, \nabla \varphi_n)$ to the system (\ref{dsh3}) on $I_n=[0,T_n]$ such that $v_n(0)=w_n$. Notice that $T$ and $T_n$ depends on $\|w\|_{H^1}$ and $\|w_n\|_{H^1}$, respectively, and since $w_n \rightarrow w$ in $H^1$, we see that, for sufficiently large $n$, say $n\geq n_0$, one may take $T_n=T$. That is, $v$ and $\{v_n\}$ all define on $I=[0,T]$, for $n\geq n_0$.

Now, we show that $v_n \rightarrow v$ in $Y \subset \mathcal C(I, H^1)$. Indeed, since $v_n$ and $v$ satisfy (\ref{int1}), one has
\begin{align}    \label{diff}
v_n -  v  =   G_0  (w_n-w) + i [ G \circ F (v_n)  -  G \circ F (v) ].
\end{align}
Take the $X-$norm on both sides of (\ref{diff}) and apply Lemma \ref{lgog1}. We obtain
\begin{align}    \label{hell}
\|v_n-v\|_X  &\leq \|G_0  (w_n-w)\|_X +  \| G \circ F (v_n)  -  G \circ F (v) \|_X    \notag\\
&\leq \gamma \|w_n-w\|_2 + \| G \circ F (v_n)  -  G \circ F (v) \|_X.
\end{align}
We shall estimate the second term on the right-hand side of (\ref{hell}). By the construction of the solutions $v_n$ and $v$, we know that
$v_n \in B_{R}(G_0 w_n, Y)$ and $v \in B_{R}(G_0 w, Y)$. Since $w_n \rightarrow w$ in $H^1$, we see that $v_n \in B_{2R}(G_0 w,Y)$ for sufficiently large $n$. As a result, by (\ref{R1}), there exists $R_1>0$ such that $v_n$, $v\in B_{R_1}(X_0)$. Therefore, by (\ref{li-2}), we have
$$\| G \circ F (v_n)  -  G \circ F (v) \|_X \leq 5 \gamma (\beta+|\rho|) R_1^2 T^{\frac{1}{2}} \|v_n - v\|_X,$$
and along with (\ref{hell}), it follows that
\begin{align}     \label{cha1}
\|v_n-v\|_X  \leq \gamma \|w_n-w\|_2+ 5 \gamma (\beta+|\rho|) R_1^2 T^{\frac{1}{2}} \|v_n - v\|_X.
\end{align}

Next we take the gradient on both sides of (\ref{diff}) and notice that $G_0$ and $G$ are linear operators. One has
\begin{align}  \label{diff-2}
\nabla v_n -  \nabla v  =   G_0 \left( \nabla w_n-  \nabla w \right) + i \left[ G \left( \nabla  F (v_n)  - \nabla F (v) \right) \right].
\end{align}
By taking the $X-$norm on both sides of (\ref{diff-2}) and applying Lemma \ref{lgog1}, it follows that
\begin{align}  \label{hell-2}
\|\nabla v_n -  \nabla v\|_X
&\leq   \| G_0 \left( \nabla w_n-  \nabla w \right) \|_X  +  \|G \left( \nabla  F (v_n)  - \nabla F (v) \right)\|_X \notag\\
&\leq    \gamma \|\nabla w_n-  \nabla w\|_2 + \gamma \|\nabla  F (v_n)  - \nabla F (v)\|_{\frac{4}{3},\frac{4}{3}}.
\end{align}
We shall estimate the second term on the right-hand side of (\ref{hell-2}). Notice
 \begin{align}    \label{f-2}
 \|\nabla  F (v_n)  - \nabla F (v)\|_{\frac{4}{3},\frac{4}{3}} \leq \beta(\tilde I_1+\tilde I_2)+|\rho|(\tilde I_3+\tilde I_4),
 \end{align}
and we claim
 \begin{align}
 & \tilde I_1:=\|\nabla(B(|v_n|^2-|v|^2)v_n)\|_{\frac{4}{3},\frac{4}{3}}\leq C T^{\frac{1}{2}} R_2^2  \|v_n-v\|_Y, \label{I1}\\
 & \tilde I_2:=\|\nabla(B(|v|^2)(v_n-v))\|_{\frac{4}{3},\frac{4}{3}}\leq C T^{\frac{1}{2}} R_2^2  \|v_n-v\|_Y ,\label{I2}\\
 & \tilde I_3:=\|\nabla(B(E(B(|v_n|^2-|v|^2)))v_n)\|_{\frac{4}{3},\frac{4}{3}}\leq C T^{\frac{1}{2}} R_2^2  \|v_n-v\|_Y,\label{I3}\\
 & \tilde I_4:=\|\nabla(B(E(B(|v|^2)))(v_n-v))\|_{\frac{4}{3},\frac{4}{3}}\leq C T^{\frac{1}{2}} R_2^2  \|v_n-v\|_Y ,   \label{I4}
 \end{align}
 for some $R_2 > 0$.
All of the inequalities (\ref{I1})-(\ref{I4}) can be justified similarly, so we solely demonstrate the proof for (\ref{I3}). In fact, by using H\"older's inequality as well as (\ref{L2b}) and (\ref{L2bb}), we deduce
\begin{align}  \label{I31}
\tilde I_3 &\leq \|B(E(B(\nabla |v_n|^2-\nabla |v|^2)))v_n\|_{\frac{4}{3},\frac{4}{3}}+\|B(E(B(|v_n|^2-|v|^2))) \nabla v_n\|_{\frac{4}{3},\frac{4}{3}} \notag\\
&\leq  \left( \int_0^T \|\nabla |v_n|^2  -   \nabla |v|^2 \|_2^{\frac{4}{3}}   \|v_n\|_4^{\frac{4}{3}}  dt  \right)^{\frac{3}{4}}
+  \left( \int_0^T \| |v_n|^2  -  |v|^2 \|_2^{\frac{4}{3}}   \|\nabla v_n\|_4^{\frac{4}{3}}  dt  \right)^{\frac{3}{4}}.
\end{align}
Notice that $|\nabla |v_n|^2-\nabla |v|^2| = |\nabla (v_n \bar v_n)-\nabla (v \bar v)|  \leq 2 |\nabla v_n - \nabla v||v_n|+ 2 |v_n - v| |\nabla v|$. It follows that
\begin{align}   \label{I32}
\|\nabla |v_n|^2-\nabla |v|^2|\|_2
&\leq 2 \| |\nabla v_n - \nabla v||v_n|  \|_2+ 2 \| |v_n - v| |\nabla v|   \|_2 \notag\\
& \leq 2 \|\nabla v_n - \nabla v\|_4  \|v_n\|_4 + 2   \|v_n - v\|_4    \|\nabla v\|_4,
\end{align}
for all $t\in [0,T]$. By (\ref{I31}) and (\ref{I32}), we deduce
\begin{align*}
\tilde I_3  &\leq  CT^{\frac{1}{2}} \big[\|\nabla v_n  -  \nabla  v\|_{4,4} \|v_n\|_{4,\infty}^2
+ \|v_n-v\|_{4,\infty}  \|\nabla v\|_{4,4}   \|v_n\|_{4,\infty}      \notag\\
&\hspace{1 in}+\left(  \|v_n\|_{4,\infty}  +  \|v\|_{4,\infty}  \right)  \|v_n - v\|_{4,\infty}  \|\nabla v_n\|_{4,4} \big]  \notag\\
&\leq C T^{\frac{1}{2}} \left(\|v_n\|_{4,\infty}^2+\|v\|_{4,\infty}^2+\|\nabla v_n\|_{4,4}^2+ \|\nabla v\|_{4,4}^2\right)
\left( \|\nabla v_n - \nabla v\|_{4,4}  +  \|v_n  -  v  \|_{4,\infty}   \right).
\end{align*}
Since $v_n$, $v\in B_{2R}(G_0 w,Y)$ for sufficiently large $n$, there exists $R_2>0$ such that $\|v_n\|_{4,\infty}, \|v\|_{4,\infty}, \|\nabla v_n\|_{4,4}, \|\nabla v\|_{4,4} \leq R_2$ for all $n$. Consequently,
\begin{align*}
\tilde I_3   \leq C T^{\frac{1}{2}} R_2^2  \|v_n-v\|_Y .
\end{align*}
By virtue of (\ref{hell-2})-(\ref{I4}), we obtain
\begin{align} \label{cha2}
\|\nabla v_n -  \nabla v\|_X   \leq  \gamma \|\nabla w_n-  \nabla w\|_2 + C\gamma (\beta+|\rho|)   T^{\frac{1}{2}} R_2^2  \|v_n-v\|_Y .
\end{align}
Combining (\ref{cha1}) and (\ref{cha2}) yields
\begin{align}
\|v_n - v\|_{Y}     \leq     \gamma \|w_n - w\|_{H^1}  + C \gamma (\beta+|\rho|)   T^{\frac{1}{2}} (R_1^2+R_2^2)  \|v_n-v\|_Y.
\end{align}
If $T\leq T^*$, where $T^*$ satisfies $C \gamma (\beta+|\rho|)   (T^*)^{\frac{1}{2}} (R_1^2+R_2^2)=\frac{1}{2} $, then
\begin{align*}
\|v_n - v\|_{Y}    \leq    2 \gamma \|w_n - w\|_{H^1}.
\end{align*}
Since $w_n \rightarrow w$ in $H^1$, we obtain $v_n \rightarrow v$ in $Y  \subset \mathcal C(I, H^1)$. If $T^*$ is shorter than the life span of the solution $v$, the above argument can be iterated. Finally, it is straightforward to deduce that $(\varphi_n)_x=B(E(B(|v_n|^2))) \rightarrow
\varphi_x=B(E(B(|v|^2)))$ in $\mathcal C(I, W^{4,p})$ for $p>1$ by using $v_n \rightarrow v$ in $\mathcal C(I, H^1)$.   \hfill $\Box$
\end{proof}

\subsection{Short-time existence and uniqueness of solutions in $H^2$}   \label{shortH2}
Let $\mathbf z=(x,y)$ and $t\in I=[0,T]$, we introduce the function spaces:
\begin{align}
&Z=\{v\in X:\; v_t \in X, \; \Delta v \in L_t^{\infty} L_{\mathbf z}^2\}, \text{\;\;where\;\;} X=L^{\infty}_t L^2_{\mathbf z} \cap L^4_t L_{\mathbf z}^4,  \label{Z}\\
&\bar Z = \{v\in \bar X: \;v_t \in \bar X, \; \Delta v \in \mathcal C(I, L^2)    \},   \text{\;\;where\;\;}
\bar X=  \mathcal C(I, L^2) \cap L^4_t L^4_{\mathbf z},        \label{barZ}      \\
& Z'=\{f\in L^{\infty}_t L^2_{\mathbf z}:  \;  f_t \in X' \},  \text{\;\;where\;\;}
X'=L_t^1 L_{\mathbf z}^2 + L_t^{\frac{4}{3}} L_{\mathbf z}^{\frac{4}{3}},                \label{Zprime}
\end{align}
with the norm
\begin{align*}
\|v\|_{Z} =   \max\{\|v\|_X, \; \|v_t\|_X, \|\Delta v\|_{2, \infty} \},   \;\;\;\;
\|f\|_{Z'}= \max\{ \|f\|_{2,\infty}, \; \|f_t\|_{X'} \}.
\end{align*}
Recall that $\|v\|=\max\{\|v\|_{2,\infty}, \; \|v\|_{4,4}\}$ and $\|f\|_{X'} = \inf \{\|g\|_{2,1} + \|h\|_{\frac{4}{3},\frac{4}{3}}:  \; f=g+h   \}  $.
Also, note that $v\in Z$ may also be characterized by $v\in L^{\infty}(I, H^2)$ and $v_t \in X$ \cite{kat}.

\begin{theorem}  \label{localH2}
Let $v_0 \in H^2$. Then there exists a unique solution $(v, \nabla \varphi)$ of the RDS3 system (\ref{dsh3}), with the initial value $v(0)=v_0$, on the time interval $I=[0,T]$, for some $T(\|v_0\|_{H^2})>0$, such that $v\in \mathcal C(I, H^2)$, $v_t \in \mathcal C(I, L^2)$, and
$\nabla \varphi \in \mathcal C(I, H^6)$.
\end{theorem}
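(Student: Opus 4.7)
The plan is to parallel the strategy of Theorem \ref{localf1} by applying a contraction mapping argument to the integral equation $v = G_0 v_0 + i G \circ F(v)$, now in the space $Z$ defined in (\ref{Z}), which captures $v \in L^\infty(I, H^2)$ together with $v_t \in X$. First I would verify that $G_0 v_0 \in \bar Z$ whenever $v_0 \in H^2$: this uses the commutation of $G_0$ with $\Delta$ and with $\partial_t$, together with the Strichartz-type estimates for the Schr\"odinger group reviewed in Appendix A. I would then establish analogues of Propositions \ref{Gf1} and \ref{pf1} for $F: Z \to Z'$ and $G \circ F: Z \to Z$, namely bounds of the form $\|F(v)\|_{Z'} \leq C T^\theta P(\|v\|_Z)$ together with a local Lipschitz estimate in the $X$-metric.

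The crucial new estimate is a bound on $\partial_t F(v)$ in the dual norm $X'$. Differentiating $F(v) = \beta B(|v|^2) v - \rho B(E(B(|v|^2))) v$ in time produces two families of trilinear terms, typified by $B(|v|^2) v_t$ on the one hand and $B(v_t \bar v + v \bar v_t) v$ on the other, along with their non-local counterparts involving $E$. Each such term is estimated by H\"older in space-time after invoking the $L^p$ boundedness of $B$ and $E$ from (\ref{L2b})--(\ref{posinq}) and the embedding $Z \hookrightarrow L^\infty(I, H^2) \hookrightarrow L^\infty(I, L^\infty)$ from (\ref{inq4}); the bound on $\|F(v)\|_{2,\infty}$ is similar but easier since it does not see $v_t$. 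Once these estimates are in place, a contraction argument identical in structure to Lemma \ref{lphe1}, carried out on a ball $B_R(G_0 v_0, Z)$ with the $X$-metric, produces a unique fixed point $v \in Z$ on a short interval depending on $\|v_0\|_{H^2}$. The continuity $v \in \mathcal C(I, H^2)$ and $v_t \in \mathcal C(I, L^2)$ is then obtained by repeating the argument in the space $\bar Z$ defined in (\ref{barZ}), and uniqueness in $H^2$ is inherited from uniqueness in $H^1$ (Theorem \ref{localf1}).

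For the regularity $\nabla \varphi \in \mathcal C(I, H^6)$, I use the representation $\varphi_x = B(E(B(|v|^2)))$ together with the fact that $H^2(\mathbb R^2)$ is a Banach algebra, so $v \in \mathcal C(I, H^2)$ forces $|v|^2 \in \mathcal C(I, H^2)$. The Helmholtz inverse $B = (\mathrm{Id} - \alpha^2 \Delta)^{-1}$ gains two Sobolev derivatives (immediate from Plancherel applied to its Fourier multiplier $(1+\alpha^2|\xi|^2)^{-1}$), so $B(|v|^2) \in \mathcal C(I, H^4)$; the zero-order multiplier $E$ preserves $H^4$; and one more application of $B$ yields $\varphi_x \in \mathcal C(I, H^6)$, with the analogous chain handling $\varphi_y$. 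The main obstacle I anticipate is the $X'$-bound on $\partial_t F(v)$: since $v_t$ is controlled only in $X$, one must exploit the smoothing of $B$ and the trilinear structure of $F$ to distribute the Strichartz-admissible exponents $(2,\infty)$ and $(4,4)$ across the three factors, without asking of $v_t$ any regularity beyond what $Z$ provides.
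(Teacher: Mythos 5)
Your overall strategy --- a contraction for $\mathcal T(v)=G_0v_0+iG\circ F(v)$ on a ball in $Z$ with the $X$-metric, continuity upgraded through $\bar Z$, and the chain $|v|^2\in\mathcal C(I,H^2)\Rightarrow B(|v|^2)\in\mathcal C(I,H^4)\Rightarrow \varphi_x=B(E(B(|v|^2)))\in\mathcal C(I,H^6)$ --- is exactly the paper's, and your direct time-differentiation of the trilinear terms in $F$ is an acceptable substitute for the paper's difference-quotient argument (time shifts $\tau_s$ fed into the Lipschitz bound (\ref{f1b})), yielding the same estimate $\|\partial_t F(v)\|_{X'}\leq C(\beta+|\rho|)\,T^{1/2}\|v\|_{X_0}^2\|v_t\|_{4,4}$.

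However, there is a genuine gap in the self-mapping step. You propose to close the contraction with a bound of the form $\|F(v)\|_{Z'}\leq CT^{\theta}P(\|v\|_Z)$, but the $Z'$-norm defined in (\ref{Zprime}) contains the component $\|F(v)\|_{2,\infty}=\sup_{t\in I}\|F(v)(t)\|_2$, and this quantity cannot carry any positive power of $T$: at $t=0$ it equals $\|F(v_0)\|_2$, a fixed nonzero number of size $\sim\|v_0\|_{H^2}^3$. This term is not harmless, because by Lemma \ref{proZ} it feeds directly into $\|\Delta(G\circ F(v))\|_{2,\infty}$ (indeed $\Delta Gf=-i(\partial_tGf-f)$, so the sup-in-time of $\|f\|_2$ enters with no time integral to produce smallness). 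Consequently the self-map inequality would read $R\geq \gamma\|v_0\|_{H^2}+C'R^3$ with $C'$ independent of $T$, which has no solution $R$ once $\|v_0\|_{H^2}$ is large, and your argument fails for large data. The paper's remedy, which your proposal is missing, is to split $F(v)=F(v_0)+\bigl(F(v)-F(v_0)\bigr)$: the time-independent piece contributes only the fixed amount $(2\gamma+1)C(\beta+|\rho|)\|v_0\|_{H^2}^3$, absorbed by choosing $R>\gamma\|v_0\|_{H^2}+(2\gamma+1)C(\beta+|\rho|)\|v_0\|_{H^2}^3$ at the outset, while the difference vanishes at $t=0$ and is made small via Kato's H\"older-continuity-in-time estimate (\ref{katot}), $\|v(t)-v(0)\|_{2p}\leq Ct^{\theta}\|v\|_Z$, which produces the factor $T^{2/3}$ in $\|F(v)-F(v_0)\|_{2,\infty}$. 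Without this decomposition (or an equivalent device) the contraction on $B_R(Z)$ does not close.
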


\begin{proof}
We follow the approach in \cite{kat}. Define the closed ball $B_{R}(Z)=\{v\in Z: \; \|v\|_{Z}  \leq R  \}$. Let $v_0 \in H^2$ and define the set $A$ as
\begin{align*}
A=\{v\in B_{R}(Z):\; v(0)=v_0\}.
\end{align*}
Also, we define the operator $\mathcal T: Z \rightarrow Z$ by
$\mathcal T(v)= G_0 v_0  +  i G \circ F (v)$, where the linear operators $G_0$ and $G$ are defined in (\ref{G0G1}).

We shall show that $\mathcal T(A) \subset A$ provided $R$ is large enough and $T$ is sufficiently small.
Let $v\in A$. Applying Lemma \ref{proZ}, we estimate
\begin{align}  \label{esT}
\|\mathcal T(v)\|_Z  &\leq    \|G_0 v_0\|_Z  +  \|G \circ F (v)\|_Z  \notag\\
&\leq  \|G_0 v_0\|_Z  +  \|G (F (v)-F(v_0) )  \|_Z+      \|G (F(v_0) )  \|_Z   \notag\\
&\leq \gamma \|v_0\|_{H^2}+ (2\gamma+1) \|F (v)-F(v_0)\|_{Z'}+  (2\gamma+1) \|F(v_0)\|_{Z'}.
\end{align}
We shall evaluate the last two terms on the right-hand side of (\ref{esT}). Note that $F(v_0)$ is independent of time,
so by using (\ref{inq4}), (\ref{L2b}) and (\ref{L2bb}), we obtain
\begin{align}       \label{fv0}
&\|F(v_0)\|_{Z'} = \|F(v_0)\|_2 \leq \beta \|B(|v_0|^2)\|_2  \|v_0\|_{\infty} + |\rho| \|B(E(B(|v_0^2|)))\|_2 \|v_0\|_{\infty}    \notag\\
& \leq C(\beta + |\rho| )\||v_0|^2\|_2  \|v_0\|_{H^2}   =   C(\beta + |\rho|) \|v_0\|_4^2  \|v_0\|_{H^2}   \leq C(\beta+|\rho|) \|v_0\|_{H^2}^3.
\end{align}
Next, we estimate $\|F (v)-F(v_0)\|_{Z'}$. Indeed, by Lemma 3.3 in \cite{kat}, we have
\begin{align}  \label{katot}
\|v(t)-v(s)\|_{2p}  \leq C |t-s|^{\theta} \|v\|_Z,   \text{\;\;for\;\;} k=\frac{p-1}{p}, \;\;\; \theta = 1-\frac{k}{2}.
\end{align}
By using H\"older inequality, along with (\ref{besinq}), (\ref{posinq}) and (\ref{katot}), we evaluate
\begin{align*}
&\|F(v)-F(v_0)\|_2  \notag\\
&\leq  \beta \left(\|B(|v|^2-|v_0|^2)\|_3  \|v\|_6   +   \|B(|v_0|^2)\|_3  \|v-v_0\|_6       \right)  \notag\\
&\hspace{0.5 in} +|\rho| \left(   \|B(E(B(|v|^2-|v_0|^2)))\|_3     \|v\|_6  +     \|B(E(B(|v_0|^2)))\|_3     \|v-v_0\|_6    \right)      \notag\\
&\leq C\beta \left(\||v|^2-|v_0|^2\|_3  \|v\|_6   +   \||v_0|^2\|_3  \|v-v_0\|_6       \right)    \notag\\
&\hspace{0.5 in} + C|\rho| \left(   \||v|^2-|v_0|^2\|_3     \|v\|_6  +     \||v_0|^2\|_3     \|v-v_0\|_6    \right)   \notag\\
&\leq  C\beta \left(\||v|+|v_0|\|_6 \|v-v_0\|_6   \|v\|_6   +   \|v_0\|_6^2  \|v-v_0\|_6       \right)    \notag\\
&\hspace{0.5 in} +C|\rho| \left(   \||v|+|v_0|\|_6  \|v-v_0\|_6   \|v\|_6  +     \|v_0\|_6^2     \|v-v_0\|_6    \right)   \notag\\
&\leq C(\beta+|\rho|)\left(\|v\|_6^2+\|v_0\|_6^2 \right) \|v-v_0\|_6  \notag\\
&\leq C(\beta+|\rho|)\left(\|v\|_6^2+\|v_0\|_6^2 \right)  t^{\frac{2}{3}} \|v\|_{Z},    \;\;\;\; \text{for all\;\;}t\in [0,T].
\end{align*}
It follows that
\begin{align}   \label{Fes1}
\|F(v)-F(v_0)\|_{2,\infty} &\leq  C(\beta+|\rho|)\left(\|v\|_Z^2+\|v_0\|_{H^2}^2 \right)  T^{\frac{2}{3}} \|v\|_{Z}  \notag\\
& \leq C(\beta+|\rho|)\left(R^2+\|v_0\|_{H^2}^2 \right)  T^{\frac{2}{3}} R.
\end{align}

Recall $X'=L^1_t L^2_{\mathbf z} + L^{\frac{4}{3}}_t L^{\frac{4}{3}}_{\mathbf z}$, with the norm $\|f\|_{X'}=\inf\{\|g\|_{2,1}+\|h\|_{\frac{4}{3},\frac{4}{3}}:  \, f=g+h \} $. Thus $\|\partial_t (F(v)-F(v_0))\|_{X'} \leq \|\partial_t (F(v)-F(v_0))\|_{\frac{4}{3},\frac{4}{3}}= \|\partial_t F(v)\|_{\frac{4}{3},\frac{4}{3}}$.
We denote by $\tau_s$ the shift of time by $s\in \mathbb R$, i.e., $\tau_s v (t)=v(t+s)$. Also, we denote the identity operator by Id, then by applying (\ref{f1b}), we deduce
\begin{align*}
\|(\tau_s-\text{Id})F(v)\|_{\frac{4}{3},\frac{4}{3}}
=\|F(\tau_s v)-F(v)\|_{\frac{4}{3},\frac{4}{3}} \leq 5 (\beta+|\rho|) \|v\|_{X_0}^2 T^{\frac{1}{2}} \|(\tau_s - \text{Id})v\|_{4,4}.
\end{align*}
Dividing by $|s|$ and letting $s\rightarrow 0$, one has
\begin{align*}
\|\partial_t F(v)\|_{\frac{4}{3},\frac{4}{3}} \leq 5 (\beta+|\rho|) \|v\|_{X_0}^2 T^{\frac{1}{2}} \| v_t \|_{4,4}.
\end{align*}
This shows that
\begin{align}  \label{Fes2}
\|\partial_t (F(v)-F(v_0)) \|_{X'}   \leq 5 (\beta+|\rho|) \|v\|_{X_0}^2 T^{\frac{1}{2}} \| v_t \|_X \leq C (\beta+|\rho|)  T^{\frac{1}{2}} R^3.
\end{align}

Combining (\ref{Fes1}) and (\ref{Fes2}) yields
\begin{align}    \label{Fes3}
\|F(v)-F(v_0)\|_{Z'}  \leq C (\beta+|\rho|) [\left(R^2+\|v_0\|_{H^2}^2 \right)  T^{\frac{2}{3}} R+ T^{\frac{1}{2}} R^3].
\end{align}

By (\ref{esT}) , (\ref{fv0}) and (\ref{Fes3}), we obtain
\begin{align*}
\|\mathcal T(v)\|_Z   \leq   \gamma \|v_0\|_{H^2}+ (2\gamma+1) C (\beta+|\rho|) [\left(R^2+\|v_0\|_{H^2}^2 \right)  T^{\frac{2}{3}} R
+ T^{\frac{1}{2}} R^3 + \|v_0\|_{H^2}^3 ].
\end{align*}
If we let $R > \gamma \|v_0\|_{H^2}+ (2\gamma+1) C (\beta+|\rho|) \|v_0\|_{H^2}^3 $, and choose $T$ sufficiently small, then the above estimate implies
$\|\mathcal T(v)\|_Z < R$. Also, notice that $\mathcal T(v)(0)=v_0$. So $\mathcal T(A)\subset A$.

Next, we let $v$, $w\in A$, and using Lemma \ref{lgog1} and (\ref{f1b}), we deduce
\begin{align*}
&\|\mathcal T(v)-\mathcal T(w)\|_X  = \|G(F(v)-F(w))\|_X   \leq \gamma \|F(v)-F(w)\|_{\frac{4}{3},\frac{4}{3}}   \notag\\
&\leq 5 \gamma (\beta+|\rho|) \max\{ \|v\|_{X_0}^2, \|w\|_{X_0}^2 \} T^{\frac{1}{2}} \|v-w\|_{4,4}
\leq C \gamma (\beta+|\rho|) R^2 T^{\frac{1}{2}} \|v-w\|_X.
\end{align*}
Consequently, $\mathcal T: A \rightarrow A$ is a contraction mapping in the norm of $X$, provided $T$ is sufficiently small. It follows that $\mathcal T$ has a unique fixed point in the set $A$ with respect to the metric of $X$ by virtue of the contraction mapping theorem, i.e, there exists $v\in A$ such that $v=\mathcal T(v)=G_0 v_0+ i G \circ F(v)\in \bar Z$, due to Lemma \ref{proZ}. Therefore, $v\in \mathcal C(I, H^2)$ and $v_t \in \mathcal C(I, L^2)$. Finally, $v\in \mathcal C(I,H^2)$ implies $|v|^2\in \mathcal C(I,H^2)$ since the spatial dimension is two, and thus by (\ref{besinq}) and (\ref{L2bb}), one has $\phi_x=B(E(B(|v^2|)))\in  \mathcal C(I,H^6)$.   \hfill $\Box$
\end{proof}

\bigskip

\subsection{Conservation of the Hamiltonian}
\begin{theorem}
 Assume the initial datum $v_0\in H^1$. Let $v\in \mathcal C(I, H^1) \cap \mathcal C^1(I, H^{-1})$ with $\nabla \phi \in \mathcal C(I, W^{4,p})$, $p>1$, be the solution of RDS3 system (\ref{dsh3}). Then the Hamiltonian $$\mathcal H_3(v)=\int_{\mathbb{R}^2}\left[|\nabla v|^2 -  \frac{\beta}{2} u|v|^2 + \frac{\rho}{2} \left(\psi_x^2+\nu \psi_y^2   \right)\right]\,dxdy $$ is conserved in time.
\end{theorem}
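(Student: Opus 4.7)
The plan is to first derive the identity $\frac{d}{dt}\mathcal{H}_3(v) = 0$ rigorously for solutions emanating from $H^2$ initial data, where all formal manipulations can be justified as classical $L^2$-pairings, and then to extend the conservation to $H^1$ initial data by a density-and-continuity argument based on Theorem \ref{depend}. Accordingly, fix $v_0 \in H^1$ and a sequence $v_0^n \in H^2$ with $v_0^n \to v_0$ in $H^1$. By Theorem \ref{localH2}, each $v_0^n$ generates a solution $v^n$ with $v^n \in \mathcal{C}(I, H^2)$ and $v_t^n \in \mathcal{C}(I, L^2)$; by Theorem \ref{depend} (iterated, if necessary, to cover the entire existence interval of $v$), $v^n \to v$ in $\mathcal{C}(I, H^1)$.

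For each $H^2$ solution $v^n$, I would multiply the evolution equation in (\ref{dsh3}) by $\bar v_t^n$, integrate over $\mathbb{R}^2$, and take the real part. The Laplacian term yields $\mathrm{Re}\int \Delta v^n\,\bar v_t^n \, d\mathbf{z} = -\frac{1}{2}\frac{d}{dt}\|\nabla v^n\|_2^2$ after one integration by parts. For the $\beta$-term, using $\mathrm{Re}(v^n \bar v_t^n) = \frac{1}{2}(|v^n|^2)_t$ together with the self-adjointness of $B$ (whose Fourier symbol $(1+\alpha^2|\xi|^2)^{-1}$ is real), one writes
\[
\int u^n (|v^n|^2)_t\, d\mathbf{z} = \int B(|v^n|^2)(|v^n|^2)_t\, d\mathbf{z} = \int |v^n|^2\, u_t^n \, d\mathbf{z},
\]
whence $\int u^n(|v^n|^2)_t\,d\mathbf{z} = \frac{1}{2}\frac{d}{dt}\int u^n|v^n|^2\,d\mathbf{z}$. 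For the $\rho$-term, using the defining relations $\varphi^n - \alpha^2 \Delta \varphi^n = \psi^n$ and $|v^n|^2 = u^n - \alpha^2\Delta u^n$ and integrating by parts moves the operator off $\varphi^n$ to give $\int \varphi^n_x (|v^n|^2)_t\,d\mathbf{z} = \int \psi^n_x u^n_t \, d\mathbf{z}$; differentiating $\int ((\psi^n_x)^2 + \nu(\psi^n_y)^2)\,d\mathbf{z}$ in time and integrating by parts twice, using $\Delta_\nu \psi^n = u^n_x$, shows that $\int \psi^n_x u^n_t\,d\mathbf{z} = \frac{1}{2}\frac{d}{dt}\int ((\psi^n_x)^2 + \nu(\psi^n_y)^2)\,d\mathbf{z}$. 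Assembling these three identities and multiplying by $-2$ yields $\frac{d}{dt}\mathcal{H}_3(v^n) \equiv 0$ on $I$, so $\mathcal{H}_3(v^n(t)) = \mathcal{H}_3(v_0^n)$ for $t \in I$.

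To pass to the limit, I would verify that $\mathcal{H}_3$ is continuous on $H^1$. The gradient term is immediate. For the $\beta$-term, $v \mapsto |v|^2$ is continuous from $H^1$ into $L^2$ via $H^1 \hookrightarrow L^4$, and then $|v|^2 \mapsto u = B(|v|^2)$ is continuous into $H^2 \hookrightarrow L^\infty$ by (\ref{besinq}) and (\ref{inq4}), so $\int u|v|^2\,d\mathbf{z}$ depends continuously on $v\in H^1$. For the $\rho$-term, $\psi_x$ and $\psi_y$ are obtained from $u\in L^2$ by bounded Fourier multipliers of order zero, hence $\int ((\psi_x)^2 + \nu (\psi_y)^2)\,d\mathbf{z}$ depends continuously on $|v|^2 \in L^2$, and thus on $v \in H^1$. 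Combining the continuity with $v^n(t) \to v(t)$ in $H^1$ and the conservation $\mathcal{H}_3(v^n(t)) = \mathcal{H}_3(v_0^n)$ yields $\mathcal{H}_3(v(t)) = \mathcal{H}_3(v_0)$ for all $t \in I$.

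The step most prone to error, and the only real algebraic obstacle, is the identification of $\int \varphi_x^n(|v^n|^2)_t\,d\mathbf{z}$ with $\frac{1}{2}\frac{d}{dt}\int((\psi^n_x)^2 + \nu(\psi^n_y)^2)\,d\mathbf{z}$. This relies crucially on the self-adjointness of $B$ and $E$, together with redistributing the $x$-derivative between $\varphi^n$, $\psi^n$, and $u^n$ via the defining relations. Once this cancellation is secured for $H^2$ solutions, the remaining ingredients, namely the $H^2$ regularity theory of Section \ref{shortH2} and the continuous dependence of Theorem \ref{depend}, reduce everything to a routine density argument.
\hfill $\Box$
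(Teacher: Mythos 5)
Your proposal is correct and follows essentially the same route as the paper: first establish $\frac{d}{dt}\mathcal H_3=0$ for $H^2$ data by pairing the equation with $\bar v_t$ and taking real parts (your use of the self-adjointness of $B$ for the $\beta$-term and the redistribution of derivatives via $\varphi-\alpha^2\Delta\varphi=\psi$, $\Delta_\nu\psi=u_x$ for the $\rho$-term reproduces exactly the cancellations in the paper's computation), and then pass to general $H^1$ data by approximating with $H^2$ initial data and invoking Theorem \ref{localH2}, Theorem \ref{depend}, and the continuity of $\mathcal H_3$ on $H^1$. The only cosmetic difference is that the paper writes the $\beta$-term identity by expanding $|v|^2=u-\alpha^2\Delta u$ and integrating by parts rather than citing self-adjointness of $B$ directly.
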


\begin{proof}
First, we assume $v_0\in H^2$, then by Theorem \ref{localH2}, the RDS3 system (\ref{dsh3}) has a unique solution $v\in \mathcal C(I, H^2)$ with $v_t \in \mathcal C(I, L^2)$ and $\nabla \phi \in \mathcal C(I,H^6)$. Therefore it is legitimate to take the inner product of the equation (\ref{dsh3}) with $\bar v_t$ to obtain
\begin{align}   \label{H-1}
i \int_{\mathbb R^2}|v_t|^2 dx dy   -   \int_{\mathbb R^2}  \nabla v \cdot \nabla \bar v_t  dx dy
 + \beta   \int_{\mathbb R^2}  u    v  \bar v_t dx dy  -\rho  \int_{\mathbb R^2}  \varphi_{x}v   \bar v_t  dx dy  =0.
\end{align}
Now we take the real part of each term in the above equality. Clearly,
\begin{align}   \label{H-2}
\text{Re} \int_{\mathbb R^2}  \nabla v \cdot \nabla \bar v_t  dx dy
=\frac{1}{2}\frac{d}{dt} \int_{\mathbb R^2}  |\nabla v|^2 dx dy.
\end{align}

Moreover, since $u- \alpha^2 \Delta u = |v|^2$, we see that $u$ is real-valued, and thus
\begin{align}     \label{H-3}
&\text{Re} \int_{\mathbb R^2}  u    v  \bar v_t dx dy = \frac{1}{2}  \int_{\mathbb R^2} u \partial_t (|v|^2) dx dy
=\frac{1}{2}  \int_{\mathbb R^2} u (u_t-\alpha^2 \Delta u_t) dx dy  \notag\\
&=\frac{1}{4}\frac{d}{dt} \int_{\mathbb R^2} \left( u^2 + \alpha^2 |\nabla u|^2   \right) dx dy
=\frac{1}{4}\frac{d}{dt} \int_{\mathbb R^2} \left( u^2 - \alpha^2 u \Delta u  \right) dx dy
=\frac{1}{4}\frac{d}{dt} \int_{\mathbb R^2} u |v|^2 dx dy.
\end{align}

Recall from system (\ref{dsh3}) that $\varphi-\alpha^2 \Delta \varphi=\psi$ and $\Delta_{\nu} \psi=u_x$. Also, since $\varphi$ is real-valued, we deduce that
\begin{align}    \label{H-4}
&\text{Re} \int_{\mathbb R^2}  \varphi_{x}v   \bar v_t  dx dy = \frac{1}{2}\int_{\mathbb R^2}  \varphi_{x}  \partial_t (|v|^2)  dx dy
=\frac{1}{2}\int_{\mathbb R^2}  \varphi_x  \partial_t (u-\alpha^2 \Delta u)  dx dy  \notag\\
&=-\frac{1}{2}  \int_{\mathbb R^2} (\varphi-\alpha^2 \Delta \varphi) u_{xt} dx dy
=-\frac{1}{2}  \int_{\mathbb R^2} \psi \Delta_{\nu} \psi_t dx dy = \frac{1}{4} \frac{d}{dt} \int_{\mathbb R^2} \left(\psi_{x}^2+\nu \psi_y^2 \right) dx dy.
\end{align}

By (\ref{H-1})-(\ref{H-4}), we conclude that
\begin{align*}
\frac{d}{dt} \left(\int_{\mathbb R^2}  |\nabla v|^2 dx dy- \frac{\beta}{2} \int_{\mathbb R^2} u |v|^2 dx dy
+ \frac{\rho}{2} \int_{\mathbb R^2} \left(\psi_{x}^2+\nu \psi_y^2 \right) dx dy \right)=0\, ,
\end{align*}
i.e. $\frac{d}{dt} \mathcal H_3(v)=0$.
This shows $\mathcal H_3(v)$ is invariant in time provided $v\in \mathcal C(I, H^2)$ with $v_t \in \mathcal C(I, L^2)$.

Next, we consider the general initial data: $v_0 \in H^1$. Take a sequence of functions $\{w_n\}\subset H^2$ such that $w_n\rightarrow v_0$ in $H^1$. Then by Theorem \ref{localH2}, there exists a sequence of solutions $\{v_n\}$ of (\ref{dsh3}) on $I_n=[0,T_n]$, with the initial value $v_n(0)=w_n$, such that $v_n\in \mathcal C(I_n, H^2)$, $\partial_t v_n \in \mathcal C(I_n, L^2)$ and $\nabla \varphi_n \in \mathcal C(I_n, H^6)$. By the above result, we know that $\mathcal H_3(v_n)$ is conserved in time. Moreover, by Theorem \ref{depend}, we see that, for sufficiently large $n$, we have $v_n$ is defined on $I=[0,T]$, such that $v_n \rightarrow v$ in $\mathcal C(I, H^1)$, $\nabla \varphi_n \rightarrow \nabla \varphi$ in $\mathcal C(I, W^{4,p})$.
If follows that $u_n\rightarrow u$ in $\mathcal C(I,H^3)$ and $\nabla \psi_n \rightarrow \nabla \psi$ in $\mathcal C(I, W^{2,p})$, for $p>1$. As a result, we conclude that $\mathcal H_3(v_n)\rightarrow \mathcal H_3(v)$ on $[0,T]$, and thus $\mathcal H_3(v)$ is conserved in time.
\end{proof}

\bigskip

\subsection{The extension to global solutions in $H^1$}
In the proof of the short-time existence and uniqueness theorem for the RDS3 system (\ref{dsh3}) in section \ref{short}, we have produced the estimates that are necessary for implementing the contraction mapping argument, on the time interval $[0,T]$, where $T$ is taken to be small enough depending on the initial data. The solution of the RDS3 (\ref{dsh3}) established in Theorem  \ref{localf1} can be extended to a maximal interval of existence $[0, T_{\max})$, where $T_{\max}$ might be finite or infinite. In this section, we establish the global existence of solutions to the Cauchy problem (\ref{dsh3}), by using the conservation of the energy and the Hamiltonian. To do this, we focus attention on the maximal interval of existence $[0,T_{\max})$. If $T_{\max}=\infty$, then the solutions exist globally in time. On the other hand, if $T_{\max}<\infty$, then one has
\begin{equation}
\limsup\limits_{t\rightarrow T^-_{\max}}\|v(t)\|_{H^1}=\infty, \label{limsup}
\end{equation}
otherwise, one can extend the solution, beyond $T_{\max}$, which contradicts the fact that $T_{\max}$ is the maximal time of the existence. This argument is used to prove the global existence theorem in this section, hence we assume by contradiction that $T_{\max}<\infty$ and then show that (\ref{limsup}) does not hold, which implies that $T_{\max}=\infty$.

Now we present the proof for the extension to global solutions for the system RDS3 (\ref{dsh3}), which completes the proof of the global well-posedness of (\ref{dsh3}) stated in Theorem \ref{global3}.

 \begin{proof} Let $[0,T_{\max})$ be the maximal interval of existence of the solution established in Theorem \ref{localf1}. We assume $T_{\max}<\infty$.
It has been shown that the energy ${\cal N}(v)=\|v\|_2^2$, and the Hamiltonian
\begin{align}   \label{ham3}
{\cal H}_3 (v) = \int_{\mathbb{R}^2}  \left[ |\nabla v|^2- \frac{\beta}{2} u |v|^2
 + \frac{\rho}{2} \left(\psi_x^2+\nu \psi_y^2 \right)\right]  dxdy  \, ,
\end{align}
remains constant for all $t \in [0,T_{\max})$. We aim to derive a uniform bound of $\|v\|_{H^1}$ by using the conservation of the energy and the Hamiltonian. Indeed, it can be readily seen from (\ref{ham3}) that
\begin{align} \label{b-g}
\|\nabla v\|^2_2 = {\cal H}_3 (v) + \frac{\beta}{2} \int_{\mathbb{R}^2} u|v|^2\,dxdy - \frac{\rho}{2}  \left(\|\psi_x\|^2_2+\nu\|\psi_y\|^2_2  \right).
\end{align}
Recall that $u-\alpha \Delta u=|v|^2$, i.e., $u=B(|v|^2)$. By using (\ref{inq1}), (\ref{inq4}) and (\ref{besinq}), we estimate
\begin{align}  \label{qub}
 &\int_{\mathbb{R}^2} u|v|^2\,dxdy \leq  \|u\|_{L^{\infty}} \|v\|_2^2  \leq C\|u\|_{H^2} \|v\|_2^2 = C\|B(|v|^2)\|_{H^2} \|v\|_2^2  \notag\\
&\;\;\leq C_{\alpha} \| |v|^2 \|_2 \|v\|_2^2 =  C_{\alpha} \| v \|_4^2 \|v\|_2^2 \leq C_{\alpha} \|v\|_{H^1} \|v\|_2^3 \,,
\end{align}
where $C_\alpha \sim  1/\alpha^2$, as $\alpha \rightarrow 0^+$.

By system (\ref{dsh3}) one has $\Delta_{\nu}  \psi=u_x$, it follows that $\psi_x=E(u)$ where the operator $E$ is defined in (\ref{eop}).
Since $u=B(|v|^2)$, we obtain $\psi_x=E(B(|v|^2))$. We estimate $\|\psi_x\|_2$ in the frequency space:
\begin{align}  \label{thi1}
&\|\psi_x\|_2^2  =  \|E(B(|v|^2))\|_2^2  = \int_{\mathbb R^2}  \frac{\xi_1^4}{(\xi_1^2 + \nu \xi_2^2)^2}  \frac{1}{(1+\alpha^2 |\xi|^2)^2} |\widehat{|v|^2}(\xi)|^2 \; d\xi_1 d\xi_2 \notag\\
&\leq  \int_{\mathbb R^2}   \frac{1}{(1+\alpha^2 |\xi|^2)^2} |\widehat{|v|^2}(\xi)|^2 \; d\xi_1 d\xi_2
\leq \|v\|_2^4  \int_{\mathbb R^2}  \frac{1}{(1+\alpha^2 |\xi|^2)^2} \; d\xi_1 d\xi_2
=\frac{C}{\alpha^2}  \|v\|_2^4\, ,
\end{align}
where we have used above the convolution theorem and Young inequality for convolution to obtain
$|\widehat{|v|^2}(\xi)| = |(\widehat{v\cdot \bar v}) (\xi)|
= |(\hat v \ast \hat{\bar v})(\xi)| \leq \|v\|_2^2 $, for every $\xi\in \mathbb R^2$. Similarly,
\begin{align}     \label{thi2}
\|\psi_y\|_2^2  \leq \frac{C(\nu)}{\alpha^2}  \|v\|_2^4.
\end{align}

By (\ref{b-g}), (\ref{qub}), (\ref{thi1}) and (\ref{thi2}), one has
\begin{align*}
\|\nabla v(t)\|^2_2
&\leq  {\cal H}_3(v(t)) + \frac{\beta}{2} C_{\alpha}\|v(t)\|_{H^1} \|v(t)\|_2^3+\frac{|\rho|}{\alpha^2} C_{\nu}  \|v(t)\|_2^4      \notag\\
&\leq {\cal H}_3(v_0)+ \frac{1}{2}\|v(t)\|^2_{H^1} + \frac{\beta^2}{8}  C_{\alpha}^2 \|v_0\|_2^6
+   \frac{|\rho|}{\alpha^2} C_{\nu} \|v_0\|_2^4   \, ,
\end{align*}
due to the Young inequality as well as the conservation of the energy $\|v\|_2$ and the Hamiltonian ${\cal H}_3(v)$. Since $\|v\|^2_{H^1}=\|v\|_2^2+\|\nabla v\|_2^2$, it follows that
\begin{align*}
\|\nabla v(t)\|_2^2  & \leq 2 {\cal H}_3(v_0) + \|v_0\|^2_2 + \frac{\beta^2}{4} C_{\alpha}^2 \|v_0\|_2^6  + \frac{2|\rho|}{\alpha^2} C_{\nu} \|v_0\|_2^4,
\end{align*}
for all $t\in [0,T_{max})$. Consequently,
 \[ \limsup\limits_{t\rightarrow T^-_{\max}}\|v(t)\|_{H^1}<\infty   \, ,\]
which contradicts (\ref{limsup}), and hence the solution exists globally in time. \hfill $\Box$
 \end{proof}

\bigskip

\section{Modulation theory}   \label{modulation}
Modulation theory is introduced in order to explain the role of the regularization, through perturbation of a system that develops a singularity, in preventing singularity formation of the original system. The intention of this theory is that  the profiles of the perturbed system's solutions are asymptotic to some rescaled profiles of the original system's solutions near the singularity. By this approach, a perturbed system can be reduced into a simpler system of ordinary differential equations that do not depend on the spatial variables, and are easier to analyze both analytically and numerically. Hence, in this section, we will apply modulation theory to the RDS3 system (\ref{dsh3}) by following the ideas in \cite{fib,pap,sul} (see also \cite{Cao1}) for the purpose of observing the prevention mechanism of the singularities.

 First, we review some main results on an asymptotic construction of blow-up solutions for the DSE presented in \cite{pap,sul}.
 It is convenient to rewrite the DSE (\ref{ds}) in the terms of the amplitude $v$ and the longitudinal velocity $u_1=\phi_x$ in the form
 \begin{align}
 \begin{cases}
 i v_t + \Delta v+\beta |v|^2 v - \rho u_1 v=0   \\
 \Delta_{\nu}u_1=(|v|^2)_{xx}.      \label{ds2}
 \end{cases}
 \end{align}
 It is shown in \cite{pap,sul} that blow-up solutions of system (\ref{ds2}) have the following asymptotic form near the singularity:
\begin{align}
\begin{cases}
    v({x,y},t)\approx {\frac{1}{L(t)}} e^{i (\tau(t)- a(t) \frac{|\eta|^2}{4}) } P\left(|\eta|, b(t)\right), \\
    u_1({x,y},t)\approx -{\frac{1}{L^2(t)}}(-\Delta_{\nu})^{-1}(|P|^2)_{\eta_1\eta_1}  , \label{as}
\end{cases}
\end{align}
where $\eta=(\eta_1,\eta_2)=(\frac{x}{L}, \frac{y}{L})$, $\tau_t=L^{-2}$, $a=-L_t L$ and $b=a^2 + a_{\tau} \approx a^2$, which satisfies $b_\tau \sim -e^{-\frac{\pi}{\sqrt{b}}}$. Also, to leading order at the limit as $\tau \rightarrow \infty$, one has $b \sim \frac{1}{(\ln \tau)^2}$. The steady system of (\ref{ds2}) reads (see \cite{pap,sul})
 \begin{align}   \label{ds3}
 \begin{cases}
 \Delta P-P+\frac{b}{4} |\eta|^2 P +i  \sqrt{b} \left(\frac{1}{p}-1\right)  P+\beta  |P|^{2p} P - \rho PQ =0 \\
 \Delta_{\nu}  Q=(|P|^{2p})_{\eta_1\eta_1}
 \end{cases}
 \end{align}
 where $p>1$. Note that in the limit, $b \rightarrow0$ and $p\rightarrow1$, as $\tau \rightarrow \infty$, and $(P,Q)$ tends to $(S,X)$, a solution of
\begin{align}  \label{SX}
\begin{cases}
\Delta  S-S+\beta S^3-\rho SX=0, \\
\Delta_{\nu} X= (S^{2})_{\eta_1\eta_1},
\end{cases}
\end{align}
 with zero boundary conditions at infinity. It is also obtained in  \cite{pap,sul} that, as $b$ tends to 0,
 one has $1-\frac{1}{p} \sim \frac{1}{\sqrt{b}}e^{-\frac{\pi}{\sqrt{b}}}$ and the scaling factor $L(t)$ approaches zero, in the case of self-focusing of the original system, like $L(t) \sim (t^*-t)^{\frac{1}{2}} \left(\ln  \ln   \frac{1}{t^* - t}  \right)^{-\frac{1}{2}} .$

 Observe that the singularity in the original system (\ref{ds2}) is manifested by the fact that $L(t)$ tends to 0, as $t\rightarrow t^*$. Thus, our goal is now to show how the regularization mechanism prevents $L(t)$ from collapsing to zero.

We adopt a similar strategy as in \cite{pap,sul}. The following arguments are formal and have not been placed on the level of mathematical rigor. For small values of the parameter $\alpha$, the RDS3 system (\ref{dsh3}) can be regarded as a perturbation of the DSE (\ref{ds}). To see this, we define $$\Phi=\varphi_x,\hskip0.2cm   \Psi=\psi_x,$$
for the sake of convenience. Then equation (\ref{dsh3}) becomes
 \begin{align} \label{dsh3p1}
 \begin{cases}
  i v_t+\Delta v+\beta u v-\rho \Phi  v=0,\hspace{0.5cm} \Delta_\nu \Psi=u_{xx}, \\
  \hspace{0.5cm} u-\alpha^2 \Delta u=|v|^2, \hspace{0.5cm} \Phi-\alpha^2 \Delta \Phi=\Psi ,
 \end{cases}
 \end{align}
 and $u$ and $\Phi$ can be formally expanded in leading order $\alpha^2$ as:
 \begin{align*}
 &u = |v|^2+\alpha^2 \Delta u = |v|^2 + \alpha^2 \Delta (|v|^2+\alpha^2 \Delta u)=|v|^2+\alpha^2 \Delta |v|^2+{\cal O}(\alpha^4) \\
 &\Phi=\Psi+\alpha^2\Delta \Phi=\Psi+\alpha^2\Delta (  \Psi+\alpha^2\Delta \Phi  )=
 \Psi+\alpha^2 \Delta \Psi+{\cal O}(\alpha^4) .
 \end{align*}
Thus we can rewrite equation (\ref{dsh3p1}) to the leading order of $\alpha^2$ as
 \begin{align} \label{dsh3p2}
 \begin{cases}
  i v_t+ \Delta v+\beta |v|^2 v-\rho \Psi v+ \alpha^2 \left(\beta v \Delta |v|^2 - \rho v \Delta \Psi \right) =0      \\
 \Delta_{\nu} \Psi = (|v|^2)_{xx}+\alpha^2 \Delta (|v|^2)_{xx} .
 \end{cases}
 \end{align}

The numerical simulations \cite{pap} suggest that the blow-up of the DSE (\ref{ds}) is very similar to that of the critical NLS (\ref{nls})
and the typical scales remain comparable in the $x$ and $y$ directions. Therefore we choose to use the same scaling factor $L(t)$ in both directions.
As in \cite{pap,sul}, we define
\begin{align*}
&\xi_1=\frac{x}{L(t)};  \;\;\;\;\;      \xi_2=\frac{y}{L(t)}   ;     \;\;\;\;\;  \tau=\int_0^t  \frac{1}{L^2(s)} ds ; \\
&  U(\xi_1,\xi_2,\tau) = L(t) v(x,y,t);     \;\;\;\;\; W(\xi_1,\xi_2,\tau) =  L^2(t)  \Psi(x,y,t).
\end{align*}
Since $U$ and $W$ depend on the new variables $\xi_1$, $\xi_2$ and $\tau$, in what follows we denote
\begin{align*}
\nabla = (\partial_{\xi_1}, \partial_{\xi_2}), \;\;\;\;\;  \Delta= \partial_{\xi_1\xi_1} + \partial_{\xi_2 \xi_2}, \;\;\;\;\; \Delta_{\nu}= \partial_{\xi_1 \xi_1} + \nu \partial_{\xi_2 \xi_2}.
\end{align*}

Notice that
$v_t = \partial_t \left[  \frac{U(\xi_1,\xi_2,\tau)}{L(t)}   \right]=  \frac{1}{L^3} \left[  U_{\tau}+a (U+\xi \cdot \nabla U )  \right],$
where $a=-L_t L$ and $\xi=(\xi_1,\xi_2)$. Then equation (\ref{dsh3p2}) can be written as
\begin{align*}
\begin{cases}
i U_{\tau} + i a(U+\xi \cdot \nabla U)+\Delta U+\beta |U|^2 U-\rho W U
+\epsilon\left(\beta U \Delta |U|^2   -\rho U \Delta W  \right) =0  \\
\Delta_{\nu} W = (|U|^2)_{\xi_1 \xi_1}+  \epsilon \Delta (|U|^2)_{\xi_1  \xi_1}
\end{cases}
\end{align*}
where $\epsilon=\frac{\alpha^2}{L^2}$. Inspired by (\ref{as}) we set
\begin{align*}
U(\xi,\tau)=e^{ i(\tau - a \frac{|\xi|^2}{4})} V(\xi,\tau),
\end{align*}
and let $b = a_{\tau}+a^2$. Therefore
\begin{align}    \label{rescaled}
\begin{cases}
i V_{\tau} + \Delta V -V +  \frac{b}{4}|\xi|^2  V    + \beta |V|^2 V  - \rho W V +
\epsilon (\beta V \Delta |V|^2 - \rho V \Delta W )=0    \\
\Delta_{\nu} W = (|V|^2)_{\xi_1 \xi_1}+  \epsilon \Delta (|V|^2)_{\xi_1  \xi_1}.
\end{cases}
\end{align}

We observe that, on one hand, equation (\ref{rescaled}) becomes the rescaled form of the RDS1 system (\ref{dsh1}) if the terms $-\epsilon \rho V \Delta W$
and $\epsilon \Delta (|V|^2)_{\xi_1  \xi_1}$ are neglected. On the other hand, if the term $\epsilon \beta V \Delta |V|^2$ is omitted from (\ref{rescaled}), the equation becomes the rescaled form of the RDS2 system (\ref{dsh2}). Therefore, the argument in this section can also be applied to the RDS1 and RDS2 systems in a straightforward manner.

Analogously to \cite{pap, sul}, we formally modulate the degree of the nonlinearity, and introduce the steady state system (similar to (\ref{ds3}))
\begin{align} \label{dsh3p5}
 \begin{cases}
     \Delta V^0-V^0 + \frac{b}{4}|\xi|^2 V^0 + \beta |V^0|^{2p} V^0 -\rho W^0 V^0
       +i \sqrt{b} \left(\frac{1}{p}-1\right) V^0 \\
    \hspace{2cm}   +\epsilon(\beta V^0 \Delta |V^0|^{2p} - \rho  V^0 \Delta W^0)=0, \\
   \Delta_{\nu} W^0=(|V^0|^{2p})_{\xi_1 \xi_1} + \epsilon \Delta (|V^0|^{2p})_{\xi_1\xi_1},
 \end{cases}
 \end{align}
with $p>1$ and $b>0$, where $V^0(|\xi|,b(\tau),\epsilon(\tau))$ and  $W^0(|\xi|,b(\tau),\epsilon(\tau))$ are quasi-steady in $\tau$.

At this stage, we expand $V^0$ and $W^0$ with respect to small values of $b$ and $\epsilon$:
 \begin{align}  \label{app}
 \begin{cases}
 V^0 = S(|\xi|) + b(\tau) G(|\xi|)+\epsilon(\tau) H(|\xi|)+{\cal O}(b^2,\epsilon^2)\\
 W^0 = X(|\xi|) + b(\tau) Y(|\xi|)+\epsilon(\tau) Z(|\xi|)+{\cal O}(b^2,\epsilon^2).
 \end{cases}
 \end{align}
 We consider the condition $p(b(\tau))\rightarrow 1^+$, as $\tau\rightarrow \infty$, then the equations for $(S,X)$  are given by
  \begin{align}    \label{SX}
  \begin{cases}
  \Delta S -S +\beta S^3 -\rho S X = 0\\
  \Delta_{\nu} X-(S^2)_{\xi_1 \xi_1} = 0
  \end{cases}
  \end{align}
 and the equations for $(G,Y)$ are
 \begin{align}   \label{GY}
  \begin{cases}
  \Delta G-G+3 \beta GS^2-\rho (SY+GX)= -\frac{1}{4}|\xi|^2 S \\
  \Delta_{\nu} Y-2(GS)_{\xi_1\xi_1}=0
  \end{cases}
 \end{align}
 with zero boundary conditions at infinity.

 Notice that (\ref{SX}) is a system of nonlinear PDEs, which is essentially identical to the system (\ref{groundds}), whose solutions are ground states (standing waves) of DSE, and the existence, regularity, and asymptotics of the ground states have been studied in \cite{cip}. On the other hand, (\ref{GY}) is a system of linear equations, and due to the Fredholm alternative, (\ref{GY}) is solvable provided the vector determined by the right-hand side of the system is orthogonal to the kernel of the adjoint of the operator arising in the left-hand side. In particular, the vector $(-\frac{1}{4}|\xi|^2 S,\; 0)$ needs to be orthogonal to the solution set of the equation
 \begin{align}   \label{GY-dual}
  \begin{cases}
  \Delta \tilde G- \tilde G+3 \beta \tilde G S^2-\rho \tilde G X  - 2 S \tilde Y_{\xi_1\xi_1} = 0 \\
  \Delta_{\nu} \tilde Y - \rho S \tilde G = 0.
  \end{cases}
 \end{align}
By virtue of (\ref{SX}), the solution set of (\ref{GY-dual}) is spanned by
  \begin{equation}   \label{GYdualsol}
 \left\{\left(
         \begin{array}{c}
           S_{\xi_1} \\
           \frac{\rho}{2}X_1 \\
         \end{array}
       \right),
  \left(
         \begin{array}{c}
            S_{\xi_2}  \\
            \frac{\rho}{2}X_2 \\
         \end{array}
       \right)\right\}
 \end{equation}
 where $(X_1)_{\xi_1}=X$ and $(X_2)_{\xi_1\xi_1}=X_{\xi_2}$. As a result, the solvability condition of system (\ref{GY}) is
 $\int_{\mathbb{R}^2}|\xi|^2 S S_{\xi_j}\,d\xi_1d\xi_2=0$, $j=1,2$, that is,
 $$\int_{\mathbb{R}^2}\xi_j S^2\,d\xi_1d\xi_2=0, \;\;\; j=1,2.$$ This condition is satisfied since $S$ is symmetric with respect to the variables $\xi_1$ and $\xi_2$, which is confirmed by numerical simulations \cite{pap,sul}.

Moreover, the equations for $(H,Z)$ are
\begin{align}   \label{HZ}
 \begin{cases}
 \Delta H-H+3\beta H S^2-\rho (SZ + HX) = -\beta S \Delta (S^2) + \rho S \Delta X  \\
 \Delta_{\nu} Z- 2(SH)_{\xi_1\xi_1} = \Delta (S^2)_{\xi_1\xi_1},
  \end{cases}
 \end{align}
 with zero boundary conditions at infinity. Existence of solutions for (\ref{HZ}) requires that the right-hand side of (\ref{HZ}) be orthogonal to the
 kernel of the adjoint of the operator arising in the left-hand side, which is also spanned by the vectors given in (\ref{GYdualsol}).
  The solvability condition of system (\ref{HZ}) thus reads
 \[
 \int_{\mathbb{R}^2}  \left[ -\beta  \Delta (S^2)  \partial_{\xi_j}(S^2)+\rho \partial_{\xi_j} (S^2) \Delta X +\rho \Delta (S^2) X_{\xi_j} \right] \; d\xi_1d\xi_2=0,
 \]
for $j=1,2$, which can be reduced to,
\begin{align*}
\int_{\mathbb{R}^2} \Delta (S^2)  \partial_{\xi_j}(S^2)\; d\xi_1 d\xi_2=0,
\end{align*}
which is valid since $S$ is symmetric with respect to $\xi_1$ and $\xi_2$.

Next, we consider the unsteady problem (\ref{rescaled}). Let $V=V^0+V^1$ and $W=W^0+W^1$. Using (\ref{rescaled}) and (\ref{dsh3p5}), we obtain a system for the remainder $V^1$ and $W^1$:
 \begin{align*}
 \begin{cases}
  \Delta V^1 - V^1 +  \frac{b}{4}|\xi|^2 V^1   +  \beta \left( |V^0+V^1|^2(V^0+V^1)- |V^0|^{2p}V^0 \right)
       \\
  ~~ -\rho (W^1 V^0 + W^0 V^1 + W^1 V^1 ) + \epsilon \beta [(V^0+V^1)\Delta|V^0+V^1|^2 -  V^0\Delta|V^0|^{2p}]
      \\
  ~~ -  \epsilon  \rho  (V^0 \Delta W^1 +V^1 \Delta W^0 + V^1 \Delta W^1)
    =  i \sqrt{b} \left(\frac{1}{p}-1\right)V^0-i(V^0+V^1)_\tau,   \\
   \Delta_{\nu} W^1=(|V^0+V^1|^2-|V^0|^{2p})_{\xi_1 \xi_1}+ \epsilon \Delta (|V^0+V^1|^2- |V^0|^{2p})_{\xi_1 \xi_1}.
 \end{cases}
 \end{align*}
By the mean value theorem, $|V^0|^2-|V^0|^{2p} \approx (1-p) |V^0|^2 \ln|V^0|^2$  due to the fact $p\rightarrow 1^+$ as $\tau\rightarrow \infty$.
Also we assume that, as $\tau\rightarrow \infty$, $|V^1|\ll |V^0|$ and $|W^1| \ll |W^0|$.
Then using (\ref{app}), to the lowest order, as $\tau\rightarrow \infty$, the above system reduces to
\begin{align*}
\begin{cases}
  \Delta V^1-V^1+\beta S^2(2V^1+\bar V^1) + \beta (1-p) (S^3 \ln S^2)-\rho (W^1 S + X V^1)   \\
   \hspace{1 in}= i \sqrt{b} \left(\frac{1}{p}-1 \right) S-i(b_{\tau} G+\epsilon_{\tau} H), \\
  \Delta_{\nu} W^1=[S(V^1+\bar V^1)+(1-p)(S^2\ln S^2)]_{\xi_1\xi_1}.
\end{cases}
\end{align*}
Substituting $V^1=V_1+iV_2$ yields
\begin{align}  \label{v122}
\begin{cases}
  \Delta V_1-V_1+3 \beta S^2 V_1 -\rho (W^1 S + X V_1) =   \beta (p-1) (S^3 \ln S^2), \\
\Delta V_2- V_2 + \beta S^2 V_2 - \rho X V_2 = \sqrt{b} \left(\frac{1}{p}-1 \right) S - (b_\tau G+\epsilon_\tau H), \\
  \Delta_{\nu} W^1- 2 (S V_1)_{\xi_1 \xi_2}= (1-p) (S^2\ln S^2)_{\xi_1\xi_1}.
\end{cases}
\end{align}
Note that (\ref{v122})$_2$ (the 2nd equation in (\ref{v122})) is decoupled from (\ref{v122})$_1$ and (\ref{v122})$_3$. Concerning the system comprised of equations (\ref{v122})$_1$ and (\ref{v122})$_3$, the existence of solutions again requires the right-hand side of the system be orthogonal to the kernel of the adjoint of the operator arising in the left-hand side, which is spanned by the vectors given in (\ref{GYdualsol}). Therefore, the solvability condition of the system comprised of equations (\ref{v122})$_1$ and (\ref{v122})$_3$ reads
\begin{align*}
\frac{1}{4} \beta (p-1) \int_{\mathbb R^2} (S^4)_{\xi_j} \ln S^2 \; d\xi_1 d\xi_2  +
\frac{\rho}{2}(p-1)\int_{\mathbb R^2} X (S^2 \ln S^2)_{\xi_j} \; d\xi_1 d\xi_2=0,
\end{align*}
 for $j=1,\,2$, which is satisfied provided $S$ is symmetric and $X$ is even in $\xi_1$ and $\xi_2$. Also notice that, due to (\ref{SX}), $S$ satisfies the left-hand side of equation (\ref{v122})$_2$, and it follows that the solvability condition for (\ref{v122})$_2$ reads
\begin{align}    \label{sovl}
\int_{\mathbb{R}^2} \left[\sqrt{b} \left(\frac{1}{p}-1 \right) S^2-b_{\tau} SG - \epsilon_{\tau} SH  \right]\,d\xi_1d\xi_2=0.
\end{align}
From Appendix B, we know that
\begin{align}    \label{B'}
&C_1=\int_{\mathbb{R}^2}SG\,d\xi_1d\xi_2 =  \frac{1}{16} \int_{\mathbb R^2} |\xi|^2 S^2 \, d\xi_1 d\xi_2 >0  \notag\\
&C_2=\int_{\mathbb{R}^2}SH\,d\xi_1d\xi_2=\frac{1}{4}\left( \beta -  \frac{2\rho}{1+\nu}     \right) \int_{\mathbb{R}^2}|\nabla S^2|^2\,d\xi_1d\xi_2 >0
\end{align}
since $\rho<0$ and $\beta>0$. Thus (\ref{sovl}) can be written as
 \begin{equation*}
 C_1 b_\tau+C_2\epsilon_\tau + \sqrt{b} \left(1-\frac{1}{p}\right)\|S\|_2^2  =0 .
 \end{equation*}
Since $p>1$ and $b>0$, we obtain that
\begin{align*}
C_1 b_\tau + C_2\epsilon_\tau <0.
\end{align*}
Integrating from $0$ to $\tau$ gives $$C_1 b + C_2 \epsilon < C_3,$$
for some constant $C_3= (C_1 b + C_2 \epsilon)|_{\tau=0}$. Recall that $b=a^2 + a_{\tau}$, with $a=-L_t L$ and $\tau=\int_0^t \frac{1}{L^2(s)} \; ds$, thus $b=-L^3 L_{tt}$. Also recall that $\epsilon=\frac{\alpha^2}{L^2}$. It follows that
\begin{align}
-C_1 L^3 L_{tt}+C_2 \frac{\alpha^2}{L^2} < C_3\;.
\end{align}
The above can be written as $L_{tt} > \frac{C_2 \alpha^2}{C_1} L^{-5} - \frac{C_3}{C_1} L^{-3}$, then multiply both sides by $2L_t<0$, we obtain
$(L_t^2)_t <  - \frac{C_2 \alpha^2}{2C_1} (L^{-4})_t  +  \frac{C_3}{C_1} (L^{-2})_t$. Integrating from 0 to $t$ gives
\begin{align}
L^2 L_t^2 <  - \frac{C_2 \alpha^2}{2C_1}\frac{1}{L^2} +  \frac{C_3}{C_1} + C_4 L^2
\end{align}
where $C_4=L_t^2(0) + \frac{C_2 \alpha^2}{2C_1}L^{-4}(0)   -  \frac{C_3}{C_1} L^{-2}(0)$\;. Therefore the scaling factor $L$ can not approach zero since $C_1$, $C_2$, $\alpha>0$. This explains the prevention of the singularity formation, at this leading order in the expansion.

\begin{remark} A similar procedure for handling singularities can also be applied to the RDS1 system (\ref{dsh1}). When $\epsilon \rho V \Delta W$
and $\epsilon \Delta (|V|^2)_{\xi_1  \xi_1}$ are neglected in (\ref{rescaled}), then $C_2$ defined in (\ref{B'}) becomes $C_2=\frac{\beta}{4}\int_{\mathbb{R}^2}|\nabla S^2|^2\,d\xi_1d\xi_2>0$, since $\beta >0$. Furthermore, for the RDS2 system (\ref{dsh2}), when $\epsilon \beta V \Delta |V|^2$ is neglected in (\ref{rescaled}), we have $C_2=-\frac{\rho}{2(\nu+1)}\int_{\mathbb{R}^2}|\nabla S^2|^2\,d\xi_1d\xi_2>0$, since $\rho<0$. Therefore, these regularizations also prevent the singularity formation of the DSE (\ref{ds}).
\end{remark}

\begin{appendix}
\section*{Appendix A}
 \setcounter{equation}{0}
 \renewcommand{\theequation}{A.\arabic{equation}}
 \setcounter{theorem}{0}
 \renewcommand{\thetheorem}{A.\arabic{theorem}}

The aim of this Appendix is to state some well-known results in the theory of the Schr\"odinger equation concerning the operators
$G_0\psi(t)=e^{it \Delta}\psi$ and
$Gf(t)=\int_0^t e^{ i (t-s) \Delta }  f(s) \,ds$ in the 2-dimensional space (see, e.g., \cite{ghidaglia,kat,sul}):

\begin{lemma} \label{lgog1}
Let $r\in[2,\infty)$, $q\in(2,\infty)$, such that $\frac{1}{q}+\frac{1}{r}=\frac{1}{2}$. Then the following estimates hold:
\begin{eqnarray*}
&& \hspace{-1.2cm} \|G_0\psi\|_{L^q(\mathbb{R};L^r)}\leq \gamma \|\psi\|_2,\;\;
\|G_0\psi\|_{L^\infty(\mathbb{R};L^2)}\leq \gamma\|\psi\|_2, \\
&& \hspace{-1.2cm}  \|Gf\|_{L^q(\mathbb{R};L^r)}\leq \gamma\|f\|_{L^{1}(\mathbb{R};L^2)},  \;\;
\|Gf\|_{L^q(\mathbb{R};L^r)}\leq \gamma\|f\|_{L^{q'}(\mathbb{R};L^{r'})}, \\
&& \hspace{-1.2cm} \|Gf\|_{L^\infty(\mathbb{R};L^2)}\leq \gamma\|f\|_{L^{q'}(\mathbb{R};L^{r'})}.
\end{eqnarray*}
Here $q'$ and $r'$ are the dual pair of $q$ and $r$, respectively.
\end{lemma}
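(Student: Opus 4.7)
The plan is to prove these five estimates by the classical Strichartz method, since they are the standard endpoint-free Strichartz bounds for the 2D Schr\"odinger propagator. I would begin by isolating the two foundational ingredients for $e^{it\Delta}$ on $\mathbb{R}^2$: first, unitarity on $L^2$, which gives $\|G_0\psi\|_{L^\infty(\mathbb R;L^2)}\leq \|\psi\|_2$ immediately; second, the dispersive (kernel) estimate
\begin{equation*}
\|e^{it\Delta}\psi\|_\infty \leq \frac{C}{|t|}\,\|\psi\|_1, \qquad t\neq 0,
\end{equation*}
which follows from the explicit representation $e^{it\Delta}\psi(x)=(4\pi i t)^{-1}\int e^{i|x-y|^2/(4t)}\psi(y)\,dy$. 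Interpolating between these two endpoints via Riesz--Thorin yields
\begin{equation*}
\|e^{it\Delta}\psi\|_r \leq C\,|t|^{-(1-2/r)}\,\|\psi\|_{r'},\qquad 2\leq r\leq\infty.
\end{equation*}

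Next I would apply the $TT^*$ argument. Writing $T\psi = G_0\psi$, the adjoint is $T^*f = \int_{\mathbb R} e^{-is\Delta}f(s)\,ds$, and $TT^*f(t)=\int_{\mathbb R} e^{i(t-s)\Delta}f(s)\,ds$. By the dispersive bound above,
\begin{equation*}
\|TT^*f(t)\|_r \leq C\int_{\mathbb R}|t-s|^{-(1-2/r)}\|f(s)\|_{r'}\,ds,
\end{equation*}
so the Hardy--Littlewood--Sobolev inequality produces $\|TT^*f\|_{L^q_tL^r_{\mathbf z}}\leq \gamma\|f\|_{L^{q'}_t L^{r'}_{\mathbf z}}$ precisely when $\frac{2}{q}+\frac{2}{r}=1$ and $q>2$, which is exactly the admissibility assumed in the lemma. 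This $TT^*$ bound simultaneously delivers, by the standard duality between $T$, $T^*$, and $TT^*$, the homogeneous estimate $\|G_0\psi\|_{L^q_t L^r_{\mathbf z}}\leq \gamma\|\psi\|_2$ and the dual inhomogeneous one $\|Gf\|_{L^\infty_t L^2_{\mathbf z}}\leq \gamma\|f\|_{L^{q'}_t L^{r'}_{\mathbf z}}$ for the untruncated operator, which then passes to $G$ itself after the standard truncation argument below.

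For the two remaining bounds involving $Gf(t)=\int_0^t e^{i(t-s)\Delta}f(s)\,ds$, the inequality $\|Gf\|_{L^q_tL^r_{\mathbf z}}\leq \gamma\|f\|_{L^1_t L^2_{\mathbf z}}$ follows by Minkowski's integral inequality from the pointwise bound $\|e^{i(t-s)\Delta}f(s)\|_{L^q_t L^r_{\mathbf z}}\leq \gamma\|f(s)\|_2$. The more delicate bound $\|Gf\|_{L^q_tL^r_{\mathbf z}}\leq \gamma\|f\|_{L^{q'}_t L^{r'}_{\mathbf z}}$ is the retarded version of the inhomogeneous Strichartz estimate: it follows from the symmetric (untruncated) $TT^*$ estimate above combined with the Christ--Kiselev lemma, whose hypothesis $q>q'$ is satisfied since $q>2$.

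The only real subtlety is this last step, namely converting the symmetric $TT^*$ bound to the retarded (Duhamel) operator $G$ with matching off-diagonal exponents $L^{q'}_tL^{r'}_{\mathbf z}\to L^q_tL^r_{\mathbf z}$. The non-endpoint restriction $q\in(2,\infty)$ in the lemma is exactly what makes Christ--Kiselev applicable; had $q=2$ been allowed, the much harder Keel--Tao endpoint argument would be needed. Since the statement excludes that case and since the results are classical, I would simply invoke the references \cite{ghidaglia,kat,sul} as the authors do, and present the outline above as the proof.
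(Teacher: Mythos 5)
Your outline is correct: the paper itself gives no proof of Lemma \ref{lgog1}, stating it as a classical fact with references to \cite{ghidaglia,kat,sul}, and your sketch (unitarity, the $|t|^{-1}$ dispersive bound, Riesz--Thorin interpolation, the $TT^*$ argument with Hardy--Littlewood--Sobolev under the admissibility condition $\tfrac1q+\tfrac1r=\tfrac12$, and duality) is precisely the standard argument found in those references. One minor simplification: for the retarded estimate with the matched dual pair $L^{q'}_tL^{r'}_{\mathbf z}\to L^q_tL^r_{\mathbf z}$ you do not actually need Christ--Kiselev, since $\|Gf(t)\|_r\leq C\int_0^t|t-s|^{-(1-2/r)}\|f(s)\|_{r'}\,ds\leq C\int_{\mathbb R}|t-s|^{-(1-2/r)}\|f(s)\|_{r'}\,ds$ and Hardy--Littlewood--Sobolev applies directly to the untruncated convolution; likewise $\|Gf(t)\|_2=\bigl\|\int_0^te^{-is\Delta}f(s)\,ds\bigr\|_2\leq\gamma\|\mathbf 1_{[0,t]}f\|_{L^{q'}(\mathbb R;L^{r'})}$ gives the last estimate without any truncation lemma.
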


\bigskip

Recall the spaces $X'$ and $Y'$ are defined in (\ref{Yprime}), and the spaces $\bar X$ and $\bar Y$ are defined in (\ref{Ybar}).
\begin{lemma}\label{lgbound}
 $G_0$ is bounded from $L^2$ into $\bar{X}$ and  bounded from $H^1$ into $\bar{Y}$. $G$ is bounded from $X'$ into $\bar{X}$ and bounded from $Y'$ into $\bar{Y}$.
 The associated norms are independent of  $T$.
 \end{lemma}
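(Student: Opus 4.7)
The plan is to reduce everything to the Strichartz-type estimates supplied by Lemma \ref{lgog1}, combined with the fact that $G_0$ and $G$ commute with spatial derivatives, and to recover the continuity in time by a standard density argument. All bounds produced by Lemma \ref{lgog1} are stated over $t\in\mathbb R$, so restricting to $I=[0,T]$ immediately yields estimates whose constants do not depend on $T$; this is where the $T$-independence claim comes from.

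First I would treat $G_0$. For $\psi\in L^2$, specializing Lemma \ref{lgog1} to the admissible pair $(q,r)=(4,4)$ gives $\|G_0\psi\|_{4,4}\le\gamma\|\psi\|_2$, and directly $\|G_0\psi\|_{2,\infty}\le\gamma\|\psi\|_2$. To upgrade $L^\infty_t L^2$ to $\mathcal C(I,L^2)$, I would use that $\{e^{it\Delta}\}_{t\in\mathbb R}$ is a strongly continuous unitary group on $L^2$ (a standard Fourier-multiplier/Plancherel statement), so $G_0\psi\in\mathcal C(I,L^2)$; hence $G_0:L^2\to\bar X$ is bounded. The $H^1\to\bar Y$ statement is then obtained by observing that $G_0$ is a Fourier multiplier, so it commutes with $\nabla$: $\nabla G_0\psi=G_0\nabla\psi$. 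Applying the $L^2\to\bar X$ bound to $\psi$ and to each component of $\nabla\psi$ yields $\|G_0\psi\|_{\bar Y}\le C\|\psi\|_{H^1}$.

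Next I would handle $G:X'\to\bar X$. Given $f\in X'$ and any decomposition $f=g+h$ with $g\in L^1_t L^2_{\mathbf z}$ and $h\in L^{4/3}_t L^{4/3}_{\mathbf z}$, Lemma \ref{lgog1} (with $(q,r)=(4,4)$, so $(q',r')=(4/3,4/3)$) yields
\begin{align*}
\|Gg\|_{2,\infty}+\|Gg\|_{4,4}&\le 2\gamma\|g\|_{2,1},\\
\|Gh\|_{2,\infty}+\|Gh\|_{4,4}&\le 2\gamma\|h\|_{\frac43,\frac43}.
\end{align*}
Adding the two and taking the infimum over decompositions $f=g+h$ gives $\|Gf\|_{\bar X}\le 2\gamma\|f\|_{X'}$. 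For the $\mathcal C(I,L^2)$ component I would first verify the claim for smooth compactly supported $g$ and $h$ (where the time-continuity of $Gg$ and $Gh$ is clear by dominated convergence, again using strong continuity of $e^{it\Delta}$ on $L^2$), and then pass to the limit by density, using the just-established $X'\to L^\infty_t L^2$ bound.

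Finally, for $G:Y'\to\bar Y$ I would again exploit that $G$ commutes with $\nabla$: if $f\in Y'$, then $\nabla f\in X'$ and $\nabla Gf=G\nabla f\in\bar X$ by the previous step, while $Gf\in\bar X$ follows from $f\in X'$. Hence $\|Gf\|_{\bar Y}\le C\|f\|_{Y'}$. The main technical point I expect to require care is not any hard estimate but rather the continuity statement $Gf\in\mathcal C(I,L^2)$ for $f$ in the sum space $X'$, which I would resolve via the density-plus-uniform-bound argument indicated above. All constants trace back to Lemma \ref{lgog1}, so independence of $T$ is automatic.
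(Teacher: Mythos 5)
The paper gives no proof of this lemma at all---it is stated in Appendix A as a well-known consequence of the Strichartz-type bounds of Lemma \ref{lgog1} (citing Ghidaglia--Saut, Kato, and Sulem--Sulem)---and your sketch is precisely the standard argument those references use, so it is correct and consistent with the paper's intent. The only bound you invoke that is not literally listed in Lemma \ref{lgog1} is $\|Gg\|_{2,\infty}\leq \gamma\|g\|_{2,1}$, which follows at once from Minkowski's integral inequality and the unitarity of $e^{it\Delta}$ on $L^2$, so nothing essential is missing.
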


\bigskip

Recall the spaces $Z$, $\bar Z$ and $Z'$ are defined in (\ref{Z}), (\ref{barZ}) and (\ref{Zprime}), respectively.
\begin{lemma}  \label{proZ}
$G_0$ is bounded from $H^2$ into $\bar Z$ and $G$ is bounded from $Z'$ into $\bar Z$ such that
\begin{align*}
& \|G_0 \psi\|_Z  \leq \gamma \|\psi\|_{H^2}    \\
& \|Gf\|_{Z} \leq (2\gamma+1) \|f\|_{Z'},  \text{\;\;if\;\;}  T\leq 1.
\end{align*}
\end{lemma}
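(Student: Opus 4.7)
The plan is to reduce every norm appearing in the definitions of $Z$ and $Z'$ to a Strichartz-type estimate already recorded in Lemma~\ref{lgog1}, by exploiting two structural identities. First, $G_0$ commutes with both $\Delta$ and $\partial_t$, so that $\partial_t G_0\psi = i\,G_0(\Delta\psi)$ and $\Delta G_0\psi = G_0(\Delta\psi)$. Second, differentiating the Duhamel integral in the variable $s$ and integrating by parts yields the key identity
\[
(Gf)_t \;=\; Gf_t \;+\; G_0\bigl(f(0)\bigr),
\]
which, combined with the elementary evolution equation $(Gf)_t = f + i\Delta Gf$, will control both $\partial_t Gf$ and $\Delta Gf$ in terms of $f$ and $f_t$.

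For the first claim I would simply chain the commutation identities with Lemma~\ref{lgog1}. Each of the three norms defining $\|G_0\psi\|_Z$ is bounded by $\gamma\|\psi\|_{H^2}$: $\|G_0\psi\|_X \le \gamma\|\psi\|_2$ directly; $\|\partial_t G_0\psi\|_X = \|G_0(\Delta\psi)\|_X \le \gamma\|\Delta\psi\|_2$; and $\|\Delta G_0\psi\|_{2,\infty} = \|\Delta\psi\|_2$ by $L^2$-unitarity of $e^{it\Delta}$. Lemma~\ref{lgbound}, applied to both $\psi$ and $\Delta\psi$, then promotes $G_0\psi$ to $\bar Z$.

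For the second claim I would derive the integration-by-parts identity first and then split $\|Gf\|_Z$ into three pieces. The $X$-piece uses Lemma~\ref{lgog1} together with the trivial bound $\|f\|_{X'}\le\|f\|_{2,1}\le T\|f\|_{2,\infty}$, giving $\|Gf\|_X\le \gamma\|f\|_{Z'}$ when $T\le 1$; this is the sole place where the hypothesis $T\le 1$ enters. Applying Lemma~\ref{lgog1} term-by-term to the identity yields $\|(Gf)_t\|_X\le \gamma\|f_t\|_{X'}+\gamma\|f(0)\|_2\le 2\gamma\|f\|_{Z'}$. Finally, rewriting $\Delta Gf = -i[(Gf)_t-f]$ gives $\|\Delta Gf\|_{2,\infty}\le \|(Gf)_t\|_{2,\infty}+\|f\|_{2,\infty}\le (2\gamma+1)\|f\|_{Z'}$. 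The maximum of the three bounds is $(2\gamma+1)\|f\|_{Z'}$, as required, and Lemma~\ref{lgbound} again promotes $Gf$ into $\bar Z$.

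The main obstacle I anticipate is rigorously justifying the identity $(Gf)_t = Gf_t + G_0 f(0)$ directly from $f\in Z'$: one must ensure that $f(0)$ is a bona fide element of $L^2$ and that the integration by parts in $s$ is legitimate at the assumed level of regularity. Since $f\in L^\infty_tL^2$ and $f_t\in X'$ is integrable in time with values in a negative-order Sobolev space, $f$ admits a continuous representative into $L^2$, so $f(0)$ is well defined. I would handle the identity itself cleanly by first verifying it for a smooth dense subclass (for instance $f\in\mathcal C^1(\bar I,H^2)$), where the computation is classical, and then passing to the limit in $Z'$ using the very Strichartz bounds established in the other steps.
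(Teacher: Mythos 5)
The paper offers no proof of this lemma --- it is stated in Appendix~A as a known result and attributed to \cite{ghidaglia,kat,sul} (it is essentially Lemma~3.2 of Kato's 1987 paper) --- so your argument can only be judged against the standard one, and it matches it. The two identities you isolate, $(Gf)_t = Gf_t + G_0(f(0))$ and $\Delta Gf = -i\left[(Gf)_t - f\right]$, together with $\|f\|_{X'}\le T\|f\|_{2,\infty}$, are exactly the mechanism behind the stated bounds, and the fact that your bookkeeping reproduces the precise constant $(2\gamma+1)$ (namely $2\gamma$ from the two terms of the derivative identity plus $1$ from the extra $\|f\|_{2,\infty}$ in the Laplacian identity, with $T\le 1$ entering only through the $X$-piece) confirms you have the intended proof. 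The $G_0$ half is likewise correct via commutation with $\Delta$ and $\partial_t$ and $L^2$-unitarity.

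One small caveat on the qualitative membership claim $Gf\in\bar Z$: your identity gives $\Delta Gf\in\mathcal C(I,L^2)$ only if $f$ itself is strongly continuous into $L^2$, whereas $f\in Z'$ (i.e.\ $f\in L^\infty_tL^2_{\mathbf z}$ with $f_t\in X'$) only guarantees continuity into $L^2+L^{4/3}$ and hence \emph{weak} $L^2$-continuity; your own density argument for the identity does not upgrade this, since the limit is taken in the $Z$-norm. This does not affect the norm inequality, which is the content actually used in Theorem~\ref{localH2}, and in that application $f=F(v)$ is shown to be H\"older continuous into $L^2$ (estimate (\ref{Fes1})), so the gap is harmless there; but as a statement about arbitrary $f\in Z'$ you should either weaken "$\mathcal C(I,L^2)$" to weak continuity for the $\Delta$-component or record the extra hypothesis.
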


 \section*{Appendix B}
 \setcounter{equation}{0}
 \renewcommand{\theequation}{B.\arabic{equation}}
This appendix is aimed to prove
\begin{align}
&\int_{\mathbb{R}^2}SG\,d\xi_1d\xi_2 =  \frac{1}{16} \int_{\mathbb R^2} |\xi|^2 S^2 \, d\xi_1 d\xi_2    \label{B0} \\
&\int_{\mathbb{R}^2}SH\,d\xi_1d\xi_2 = \frac{1}{4}\left(\beta - \frac{2\rho }{1+\nu}\right)\int_{\mathbb R^2} |\nabla S^2|^2 \; d\xi_1 d\xi_2   \label{B}
\end{align}
which were introduced in section \ref{modulation}.

The proofs for these two formulas are similar. So we only justify (\ref{B}) in details. Our argument follows the approach in \cite{pap}.

Recall that $(S,X)$ satisfies
   \begin{align}    \label{B1}
  \begin{cases}
  \Delta S -S +\beta S^3 -\rho S X = 0\\
  \Delta_{\nu} X-(S^2)_{\xi_1 \xi_1} = 0\;,
  \end{cases}
  \end{align}
and $(H,Z)$ satisfies
 \begin{align}   \label{B2}
 \begin{cases}
 \Delta H-H+3\beta H S^2-\rho (SZ + HX) = -\beta S \Delta (S^2) + \rho S \Delta X  \\
 \Delta_{\nu} Z- 2(SH)_{\xi_1\xi_1} = \Delta (S^2)_{\xi_1\xi_1}.
  \end{cases}
 \end{align}

Multiplying (\ref{B1})$_1$ by $H$, (\ref{B2})$_1$ by $S$, subtracting and integrating over $\mathbb{R}^2$, we obtain
\begin{align}   \label{B22}
\int_{\mathbb R^2}  \left(2\beta S^3 H - \rho S^2 Z + \beta  S^2 \Delta(S^2) - \rho S^2  \Delta X  \right) d\xi_1 d\xi_2  = 0\;.
\end{align}
Also, multiplying (\ref{B1})$_1$ by $(\xi_1,\xi_2) \cdot \nabla H$, (\ref{B2})$_1$ by $(\xi_1,\xi_2) \cdot \nabla S$,
adding and integrating over $\mathbb{R}^2$, it follows that
\begin{align}  \label{B3}
&\int_{\mathbb R^2} \left( 4SH - 2\beta S^3 H   +  \rho SH (2X+\xi_1 X_{\xi_1} +  \xi_2 X_{\xi_2})
+   \frac{\rho S^2}{2} (2Z+ \xi_1 Z_{\xi_1} +  \xi_2 Z_{\xi_2})  \right)  \; d\xi_1 d\xi_2       \notag\\
& = \frac{1}{2} \int_{\mathbb R^2} \left([\xi_1(S^2)_{\xi_1} + \xi_2(S^2)_{\xi_2}](-\beta \Delta(S^2)+\rho \Delta X)\right) \; d\xi_1 d\xi_2\;.
\end{align}

At this stage, let us define
\begin{equation*}
(X_1)_{\xi_1\xi_1}=X,\hspace{1cm} (Z_1)_{\xi_1\xi_1}=Z. \label{xz}
\end{equation*}
Multiplying (\ref{B1})$_2$ by $(\xi_1,\xi_2) \cdot \nabla Z_1$ and integrating over $\mathbb R^2$ yield
\begin{align}    \label{B5}
&\int_{\mathbb R^2}  (X-S^2) ( 2Z + \xi_1 Z_{\xi_1} + \xi_2 Z_{\xi_2} )   \; d\xi_1 d\xi_2 \notag\\
&+ \nu \int_{\mathbb R^2}  X  [2(Z_1)_{\xi_2\xi_2} + \xi_1 (Z_1)_{\xi_1 \xi_2 \xi_2}  + \xi_2  (Z_1)_{\xi_2 \xi_2 \xi_2} ] \; d\xi_1 d\xi_2 =0 \;.
\end{align}
Notice that
\begin{align*}
\int_{\mathbb R^2} \nu X (Z_1)_{\xi_2 \xi_2} \,d\xi_1 d\xi_2=
\int_{\mathbb R^2} \nu X_{\xi_2 \xi_2} Z_1 \,d\xi_1 d\xi_2
&=\int_{\mathbb R^2} [(S^2)_{\xi_1 \xi_1} - X_{\xi_1 \xi_1} ] Z_1 \,d\xi_1 d\xi_2  \notag\\
&=\int_{\mathbb R^2} (S^2 - X ) Z   \,d\xi_1 d\xi_2\, ,
\end{align*}
which can be substitute into (\ref{B5}), and it follows that
\begin{align}   \label{B55}
\int_{\mathbb R^2}  (X-S^2) ( \xi_1 Z_{\xi_1} + \xi_2 Z_{\xi_2} )   \; d\xi_1 d\xi_2 + \nu \int_{\mathbb R^2}  X  [\xi_1 (Z_1)_{\xi_1 \xi_2 \xi_2}  + \xi_2  (Z_1)_{\xi_2 \xi_2 \xi_2} ] \; d\xi_1 d\xi_2 =0 \;.
\end{align}
Also, multiplying (\ref{B2})$_2$ by $(\xi_1,\xi_2) \cdot \nabla X_1$ and integrating yields
\begin{align}    \label{B6}
&\int_{\mathbb R^2} [Z - 2SH - \Delta(S^2)] (2X + \xi_1 X_{\xi_1} + \xi_2 X_{\xi_2}   )   \; d\xi_1 d\xi_2   \notag\\
   &+  \nu \int_{\mathbb R^2}  Z_{\xi_2 \xi_2}[\xi_1(X_1)_{\xi_1} + \xi_2 (X_1)_{\xi_2} ]       \; d\xi_1 d\xi_2 =0\,.
\end{align}
Now substituting
\begin{align*}
\int_{\mathbb R^2} Z_{\xi_2 \xi_2}[\xi_1(X_1)_{\xi_1} + \xi_2 (X_1)_{\xi_2} ]  \; d\xi_1 d\xi_2
=- \int_{\mathbb R^2} X [\xi_1 (Z_1)_{\xi_1 \xi_2 \xi_2} + \xi_2 (Z_1)_{\xi_2 \xi_2 \xi_2}]   \; d\xi_1 d\xi_2\,
\end{align*}
into (\ref{B6}) yields
\begin{align}   \label{B66}
&\int_{\mathbb R^2} [Z - 2SH - \Delta(S^2)] (2X + \xi_1 X_{\xi_1} + \xi_2 X_{\xi_2}   )   \; d\xi_1 d\xi_2   \notag\\
&- \nu \int_{\mathbb R^2} X [\xi_1 (Z_1)_{\xi_1 \xi_2 \xi_2} + \xi_2 (Z_1)_{\xi_2 \xi_2 \xi_2}]   \; d\xi_1 d\xi_2 =0\, .
\end{align}
Adding (\ref{B55}) and (\ref{B66}) gives us
\begin{align*}
&\int_{\mathbb R^2}  (X-S^2) ( \xi_1 Z_{\xi_1} + \xi_2 Z_{\xi_2} )   \; d\xi_1 d\xi_2    \notag\\
&+ \int_{\mathbb R^2} [Z - 2SH - \Delta(S^2)] (2X + \xi_1 X_{\xi_1} + \xi_2 X_{\xi_2}   )   \; d\xi_1 d\xi_2  =0\, ,
\end{align*}
and since
\begin{align*}
\int_{\mathbb R^2}  X( \xi_1 Z_{\xi_1} + \xi_2 Z_{\xi_2} )
= -  \int_{\mathbb R^2} Z (2X + \xi_1 X_{\xi_1} + \xi_2  X_{\xi_2} )   \; d\xi_1 d\xi_2\, ,
\end{align*}
we obtain that
\begin{align}  \label{B7}
\int_{\mathbb R^2}S^2( \xi_1 Z_{\xi_1} + \xi_2 Z_{\xi_2} )   \; d\xi_1 d\xi_2 + \int_{\mathbb R^2} [2SH +\Delta(S^2)] (2X + \xi_1 X_{\xi_1} + \xi_2 X_{\xi_2}   )   \; d\xi_1 d\xi_2  =0\, .
\end{align}
Multiplying (\ref{B7}) by $\frac{\rho}{2}$ and substituting the result into the sum of (\ref{B22}) and (\ref{B3}), it follows that
\begin{align} \label{B8}
&\int_{\mathbb R^2} \left(4SH + \beta S^2 \Delta(S^2) - 2\rho S^2 \Delta X  -\frac{\rho}{2} (\Delta (S^2)) (\xi_1 X_{\xi_1} + \xi_2 X_{\xi_2} ) \right)  \; d\xi_1 d\xi_2       \notag\\
& = \frac{1}{2} \int_{\mathbb R^2} \left([\xi_1(S^2)_{\xi_1} + \xi_2(S^2)_{\xi_2}](-\beta \Delta(S^2)+\rho \Delta X)\right) \; d\xi_1 d\xi_2\;.
\end{align}
Note that
\begin{align*}
&\int_{\mathbb R^2} [\xi_1(S^2)_{\xi_1} + \xi_2 (S^2)_{\xi_2} ] \Delta(S^2)\; d\xi_1 d\xi_2 \notag\\
&=\int_{\mathbb R^2} [\xi_1(S^2)_{\xi_1}(S^2)_{\xi_1\xi_1} + \xi_2(S^2)_{\xi_2}(S^2)_{\xi_1\xi_1}
+\xi_1(S^2)_{\xi_1}(S^2)_{\xi_2\xi_2}+\xi_2(S^2)_{\xi_2}(S^2)_{\xi_2\xi_2}] \; d\xi_1 d\xi_2 \notag\\
&=\int_{\mathbb R^2} \left[-\frac{1}{2}((S^2)_{\xi_1})^2 + \frac{1}{2} ((S^2)_{\xi_1})^2
+ \frac{1}{2} ((S^2)_{\xi_2})^2 -  \frac{1}{2} ((S^2)_{\xi_2})^2 \right] \; d\xi_1 d\xi_2=0\,.
\end{align*}
Consequently, (\ref{B8}) can be reduced to
\begin{align}  \label{B9}
&4\int_{\mathbb R^2} SH \; d\xi_1 d\xi_2   \notag\\
&=\beta \int_{\mathbb R^2} |\nabla S^2|^2 \; d\xi_1 d\xi_2 +  2\rho \int_{\mathbb R^2} S^2 \Delta X  \; d\xi_1 d\xi_2  \notag\\
& \hspace{0.1 in} +\frac{\rho}{2}\int_{\mathbb R^2} \left(\Delta (S^2) (\xi_1 X_{\xi_1} + \xi_2 X_{\xi_2} )
+[\xi_1(S^2)_{\xi_1} + \xi_2(S^2)_{\xi_2}]  \Delta X \right) \; d\xi_1 d\xi_2   \notag\\
&= \beta \int_{\mathbb R^2} |\nabla S^2|^2 \; d\xi_1 d\xi_2  + 2\rho\int_{\mathbb R^2}  S^2 \Delta X \; d\xi_1 d\xi_2\;.
\end{align}
Since $S$ and $X$ are symmetric and $\Delta_{\nu} X=(S^2)_{\xi_1 \xi_1}$, we obtain that $(1+\nu)\Delta X =  \Delta S^2$ which implies that
\begin{align}     \label{B10}
\int_{\mathbb R^2}  S^2 \Delta X \; d\xi_1 d\xi_2 =  -\frac{1}{1+\nu} \int_{\mathbb R^2}  |\nabla S^2|^2  \; d\xi_1 d\xi_2\,.
\end{align}
Substituting (\ref{B10}) into (\ref{B9}) yields
\begin{align*}
\int_{\mathbb R^2} SH \; d\xi_1 d\xi_2   =  \frac{1}{4}\left(\beta - \frac{2\rho }{1+\nu}\right)\int_{\mathbb R^2} |\nabla S^2|^2 \; d\xi_1 d\xi_2\,.
\end{align*}

\end{appendix}

 \section*{Acknowledgments} I. H. would like to thank the University of California--Irvine for the kind hospitality, where part of this work was completed.
 The work of I. H. was partially supported by TÜB{\.I}TAK (Turkish Scientific and Technological Research Council). The work of E. S. T. was supported in part by the ONR grant N00014--15--1--2333 and the NSF grants DMS--1109640 and DMS--1109645.

 \section*{References}

 \begin{enumerate}
 %1
 \bibitem{ablowitz} Ablowitz M, Bakirtas I and Ilan B 2005, Wave collapse in a class of nonlocal nonlinear Schr\" odinger equations, {\it Physica D} {\bf 207} 230-253.

 %2
 % \bibitem{abl1} Ablowitz M J and Segur H 1979, On the evolution of packets of water waves,
 %                      {\it  J. Fluid Mech.} {\bf 92} 691-715.

 %3
 \bibitem{adm} Adams R  1975, {\it Sobolev Spaces} (New York: Academic).

 %4
 \bibitem{Cao1} Cao Y, Musslimani Z H and Titi E S 2008, Nonlinear Schr\"odinger-Helmholtz equation as numerical regularization of the nonlinear
 Schr\" odinger equation, {\it Nonlinearity} {\bf 21} 879-898.

 %5
 \bibitem{Cao2} Cao Y, Musslimani Z H and Titi E S 2008, Modulation theory for self-focusing in the nonlinear Schr\" odinger-Helmholtz equation,
 {\it Numer. Func. Anal. Opt.} {\bf 30} 46-69.

 \bibitem{car} Carles R 2008, {\it Semi-classical analysis for nonlinear Schr\"odinger equations} (Singapore, World
Scientific Publishing Co. Pte. Ltd.).

 %6
 \bibitem{cav1} Cazenave T 1996, {\it An Introduction to Nonlinear Schr\"odinger Equations} (Instituto de Matemática-UFRJ RJ).

 %7
 \bibitem{cav} Cazenave T 2003, {\it Semilinear Schr\"odinger Equations (Courant Lecture notes in Mathematics)} (Providence, RI: American Mathematical Society).

 %8
 \bibitem{cip} Cipolatti R 1992, On the existence of standing waves for the Davey-Stewartson system,
                    {\it Comm. Part. Diff. Eq.} {\bf 17} 967-988.
 %9
 \bibitem{dav} Davey A and Stewartson K 1974, On three dimensional packets of surface waves,
                     {\it Proc. Roy. Soc. London Series A} {\bf 338} 101-110.
 %10
  \bibitem{red} Djordjevic V D and Redekopp L G 1977, On two-dimensional packets of capillary- gravity waves,
                     {\it J. Fluid Mech.} {\bf 79} 703-714.

 \bibitem{kuz} Eden A and Kuz E 2009, Almost cubic nonlinear Schr\" odinger equation: existence, uniqueness and scattering, {\it Commun. Pure Appl. Anal.} {\bf 8} 1803-1823.

 %11
 \bibitem{fib} Fibich G and Papanicolaou G 2000, Self-focusing in the perturbed  and  unperturbed nonlinear Schr\" odinger equation
 in critical dimension, \linebreak{\it SIAM  J. Appl. Math.} {\bf 60} 183-240.

 %11new

 %12
 \bibitem{ghidaglia} Ghidaglia J M and Saut J C 1990, On the initial value problem for the Davey-Stewartson systems,
                     {\it Nonlinearity} {\bf 3} 475-506.

 %13
 \bibitem{gin} Ginibre J and Velo G 1979, On a class of nonlinear Schr\"odinger equation. I. The Cauchy problem, general case,
                     {\it J. Funct. Anal.} {\bf 32} 1-32.
 %14
 \bibitem{gls} Glassey R T 1977, On the blowing-up of solutions to the Cauchy problem for nonlinear Schr\"odinger equation,
                     {\it J. Math. Phys.} {\bf 18} 1794-1797.

 %16
 \bibitem{kat} Kato T 1987, On nonlinear Schr\"odinger equations, {\it Ann. Inst. H. Poincaré Phys. Théor.} {\bf 46} 113-129.

 %17
 \bibitem{kel} Kelley P L 1965, Self-focusing of the optical beams, {\it Phys. Rev. Lett.} {\bf 15} 1005-1008.

 %18
 \bibitem{mit} Mitrovi\'{c} D and Zubrini\'{c} D 1977, {\it Fundamentals of Applied Functional Analysis (Pitman Monographs and Surveys in Pure and Applied Mathematics vol 91)} (London: Longmans Green).

 %19
 \bibitem{pap} Papanicolaou G C, Sulem C, Sulem P L and Wang X P 1994, The focusing singularity of the Davey-Stewartson equations
                      for gravity-capillary surface wave, {\it Physica D} {\bf 72} 61-86.

 %20
 \bibitem{ste} Stein E M 1970, {\it Singular Integrals and Differentiability Properties of Functions} (Princeton University Press, Princeton, NJ).

\bibitem{fol} Stein E M and Shakarchi R 2011, {\it Functional Analysis: Introduction to further topics in analysis} (Princeton University Press, Princeton, NJ).

 %21
 \bibitem{sul} Sulem C and Sulem P L 1999, {\it The Nonlinear Schr\"odinger Equation Self-Focsing and Wave Collapse} (Applied
                     Mathematical Sciences vol 139) (Berlin: Springer -Verlag).

 %22
 \bibitem{wei} Weinstein M I 1983, Nonlinear Schr\"odinger equations and sharp interpolation estimates,
                     {\it Commun. Math. Phys.} {\bf 87} 567-576.

 %23
 \bibitem{yud1} Yudovich V I 1963, Non-stationary flow of an ideal incompressible liquid, {\it Zh. Vychils. Mat.} {\bf 3} 1032-66.
     Yudovich V I 1963, {\it Comput. Math. Phys.} {\bf 3} 1407-56 (Eng. Transl.).

 %24
  \bibitem{yud2} Yudovich V I 1989, {\it The Linearization Method in Hydrodynamical Stability Theory (Translations of Mathematical Monographs vol 74)} (Providence, RI: American Mathematical Society).
\end{enumerate}

\end{document}